\documentclass{amsart}
\usepackage[margin=1in]{geometry}
\usepackage{amssymb,amsfonts,amsmath,amsthm,enumitem}
\usepackage[hidelinks]{hyperref}

\DeclareMathOperator{\tr}{tr}
\DeclareMathOperator{\spt}{spt}
\DeclareMathOperator{\dvg}{div}
\DeclareMathOperator{\Hess}{Hess}
\DeclareMathOperator{\Reg}{Reg}
\DeclareMathOperator{\Sing}{Sing}

\newcommand{\norm}[1]{\left\lVert#1\right\rVert}
\newcommand{\ip}[2]{\left\langle#1,#2\right\rangle}

\newcommand{\R}{\mathbb{R}}
\newcommand{\Mass}{\mathbf{M}}
\newcommand{\I}{\mathbf{I}}
\newcommand{\mres}{\mathbin{\vrule height 1.6ex depth 0pt width 0.13ex\vrule height 0.13ex depth 0pt width 1.1ex}}
\newcommand{\llbracket}{\mathopen{[\mkern-3mu[}}
\newcommand{\rrbracket}{\mathclose{]\mkern-3mu]}}

\theoremstyle{plain}
\newtheorem{theorem}{Theorem}[section]
\newtheorem{proposition}[theorem]{Proposition}
\newtheorem{lemma}[theorem]{Lemma}
\newtheorem{corollary}[theorem]{Corollary}
\theoremstyle{definition}
\newtheorem{definition}[theorem]{Definition}

\theoremstyle{remark}
\newtheorem{remark}[theorem]{Remark}

\allowdisplaybreaks

\begin{document}

\author{Aidan F. Wood}
\address{Department of Mathematics, University of Connecticut, Storrs, CT 06269} \email{aidan.wood@uconn.edu}

\title[Convex hulls and minimizing currents in ALH manifolds]{Convex hulls and area minimizing currents in asymptotically locally hyperbolic manifolds}
\date{}

\begin{abstract}
We solve the oriented asymptotic Plateau problem for integral currents of arbitrary codimension in conformally compact asymptotically locally hyperbolic manifolds without global curvature assumptions. This extends Anderson's theorem in hyperbolic space. A new ingredient is the construction of a barrier hull near conformal infinity, which serves as a barrier for area minimizing currents and stationary varifolds. In the conformally compact setting, the associated convex hull identity extends Anderson's result beyond globally pinched negative curvature.
\end{abstract}

\maketitle

\section{Introduction}

Let $(M^{n+1},g)$ be a $C^2$ conformally compact asymptotically locally hyperbolic Riemannian manifold. Then there is a smooth defining function $\rho$ such that $\rho^2g$ extends to a nondegenerate $C^2$ metric on $\overline{M}$, with $|d\rho|_{\rho^2g}=1$ on $\partial M$. This terminology is motivated by a quick computation with the conformal curvature formula which shows that the sectional curvatures of $g$ tend to $-1$ at the boundary \cite{Mazzeo1988}. No global sign or pinching condition on the sectional curvature is assumed, and the conformal boundary does not have to be spherical. See Remark \ref{rem:ah-alh-terminology} for a comparison with related definitions.

We begin with convexity at conformal infinity. A subset of $M$ is \emph{totally convex} if it contains every geodesic segment whose endpoints lie in the subset. Our first construction finds functions with positive Hessian on large enough positive superlevel sets and which satisfy the static equation to leading order there. The complement of the superlevel set of such functions is totally convex by a direct geodesic argument in Proposition \ref{prop:totally-convex-complements}. This gives the first result.

\begin{theorem}\label{thm:barrier-hull}
    Let $(M^{n+1}, g)$ be $C^2$ conformally compact asymptotically locally hyperbolic. For every closed $S\subseteq \partial M$, there exists a closed totally convex set $\mathcal{H}(S) \subseteq M$ such that
    \[
        \partial_\infty \mathcal{H}(S) = S.
    \]
    If $V$ is a stationary rectifiable $p$-varifold in $M$, for $1 \leq p \leq n$, and $\partial_\infty \spt V \subseteq S$, then
    \[
        \spt V \subseteq \mathcal{H}(S).
    \]
\end{theorem}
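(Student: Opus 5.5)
The plan is to build $\mathcal{H}(S)$ as an intersection of complements of superlevel sets of local convex barrier functions, one family attached to each boundary point $q\in\partial M\setminus S$. In hyperbolic space the model is the function $f=\rho^{-1}$ times a shifted quantity, i.e.\ the static potentials $x_0$ (or horofunctions) whose superlevel sets are half-spaces cutting off a neighborhood of an ideal point. In general, fix $q\in\partial M\setminus S$ and work in a boundary chart with coordinates $(\rho,y)$ centered at $q$. Set
\[
f_{q,\epsilon}=\frac{\epsilon^2-\rho^2-|y|^2}{\rho}
\]
(computed in the compactified metric $\bar g=\rho^2 g$, possibly after modifying $\rho$ to be geodesic so that $|d\rho|_{\bar g}=1$ near $\partial M$). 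Using $g=\rho^{-2}\bar g$ and the conformal transformation law for the Hessian, I would verify that $\Hess_g f = f\,g + O(\rho)\,(\text{terms bounded by } \rho^{-1}|\cdot|_g^2)$. On the superlevel set $\{f\ge c\}$ with $c$ large (equivalently for $\epsilon$ small), this gives $\Hess_g f>0$ and the static equation to leading order. The error is controlled using only $C^2$ regularity of $\bar g$ and $|d\rho|_{\bar g}=1$ on $\partial M$.

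Next I would invoke Proposition \ref{prop:totally-convex-complements} to conclude that $M\setminus\{f_{q,\epsilon}>c\}$ is totally convex. The reason is that along a geodesic leaving and re-entering the complement, $f$ restricted to the portion inside the superlevel set is strictly convex and so cannot exceed its endpoint values. Define $\mathcal{H}(S)$ as the intersection over $q\notin S$ and admissible small $\epsilon$ of these complements, possibly also intersected with a fixed compact totally convex core if needed. Since intersections of closed totally convex sets are closed and totally convex, the first property follows. For the equality $\partial_\infty\mathcal{H}(S)=S$:
\begin{itemize}
\item The inclusion $\subseteq$ holds because each $q\notin S$ has a neighborhood in $\overline M$, namely the superlevel set, disjoint from $\mathcal{H}(S)$.
\item The inclusion $\supseteq$ holds because for $s\in S$ the removed regions are ``half-balls'' of $\bar g$-radius $<\operatorname{dist}(q,S)$ around points $q\ne s$. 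A $\bar g$-normal ray into $s$ then stays in $\mathcal{H}(S)$ near $s$, and I would choose $\epsilon<\tfrac12\operatorname{dist}_{\bar g}(q,S)$.
\end{itemize}

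For the varifold statement, suppose $\spt V$ meets $\{f>c\}$ for some barrier. Since $\partial_\infty\spt V\subseteq S$ and the closure of $\{f\ge c\}$ in $\overline M$ meets $\partial M$ only near $q$, away from $S$, the set $\spt V\cap\{f\ge c\}$ is compact in $M$. Then $f$ attains an interior maximum on $\spt V$ above $c$. The first variation with the vector field $X=\phi(f)\nabla f$, with $\phi$ nondecreasing and supported in $\{f>c\}$, gives
\[
0=\int \phi'(f)|\nabla^{T}f|^2+\phi(f)\operatorname{tr}_{T}\Hess f\,dV.
\]
Here $\operatorname{tr}_T\Hess f>0$ for every $p$-plane because $\Hess f$ is positive definite on $\{f>c\}$. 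Hence $\phi(f)=0$ on $\spt V$, a contradiction. This is the standard maximum principle for stationary varifolds and works for every $1\le p\le n$.

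The main obstacle is the Hessian estimate in the first step. It must hold uniformly on the whole superlevel set, including the part that is not near $\partial M$ in an absolute sense but has $\rho\lesssim\epsilon$, with only $C^2$ control of $\bar g$. The $\bar g$-Christoffel errors multiplied by $\rho^{-1}$ must be shown to be dominated by the positive term $f g$. I would handle this by choosing $\epsilon$ small depending on $q$ and on the $C^2$ norm of $\bar g$, and by using a compactness argument over $\partial M\setminus S$.
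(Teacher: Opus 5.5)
Your proposal is correct and is essentially the paper's argument. Your $f_{q,\epsilon}$ is, up to the harmless $-\rho$ term (whose $g$-Hessian is $O(\rho)$ in $g$-norm), the paper's $u_\psi=\phi_\psi/\rho$ with a localized quadratic profile $\psi=\epsilon^2-|y|^2$ (compare the model $\zeta$ in the upper half-space computation), and your remaining steps match the paper's: totally convex complements, the intersection over barriers whose asymptotic boundaries miss $S$, and the first variation test with $\eta(u)\nabla u$ times a cutoff, which your compactness observation for $\spt V\cap\{f\geq c\}$ justifies. Drop both of your hedges, though: intersecting with a compact core would make $\partial_\infty\mathcal{H}(S)$ empty, and under a merely $C^2$ compactification the geodesic defining function can lose a derivative. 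This is why the paper keeps the smooth $\rho$ and transports $\psi$ along the flow of $W=\nabla^h\rho/a_\rho$, so that $\ip{d\rho}{d\phi_\psi}_h=0$ exactly and the flow curves $\sigma_p$ serve simultaneously for every barrier in proving $S\subseteq\partial_\infty\mathcal{H}(S)$.
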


To distinguish it from other hulls, we call $\mathcal{H}(S)$ the \emph{barrier hull}. It depends on the choice of defining function and a specified family of convex sets. The barrier hull is intentionally different from the familiar convex hull obtained by intersecting the closures of all closed totally convex sets whose asymptotic boundaries contain $S$. Note, though, that since the barrier hull itself is one such set and has asymptotic boundary $S$, this convex hull also meets the conformal boundary exactly in $S$. Gromov suggested that the corresponding identity for convex hulls in hyperbolic space should hold in general ``strictly convex spaces'' \cite{Gromov1981}. Corollary~\ref{cor:gromov-hull} establishes that identity in the present setting.

For the Plateau problem, we regard $\overline{M}$ as a compact domain in a smooth closed ambient manifold, so that cycles on $\partial M$ and current boundary identities are understood in the usual sense. Assuming $C^3$ regularity in the following theorem ensures that the geodesic compactification is $C^2$, as required for the argument.

\begin{theorem}\label{thm:asymptotic-plateau}
    Let $(M^{n+1}, g)$ be $C^3$ conformally compact asymptotically locally hyperbolic, and let $2 \leq p \leq n$. Suppose that $A\in\I_{p-1}(\partial M)$ is a nonzero integral cycle and that $\partial Q=A$ for some $Q\in\I_p(\overline{M})$. Then there exists $\overline{T} \in \I_p(\overline{M})$ such that $\partial \overline{T}=A$ and $\overline{T}$ has no mass on $\partial M$.
    If $T := \overline{T} \mres M$, then
    \begin{enumerate}[label=\textup{(\alph*)}]
        \item $\partial T = 0$,
        \item $T$ is locally absolutely area minimizing in $(M,g)$,
        \item $\partial_\infty \spt T = \spt A$ and $T \neq 0$.
    \end{enumerate}
\end{theorem}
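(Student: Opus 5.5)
The plan follows Anderson's exhaustion scheme for hyperbolic space, with Theorem~\ref{thm:barrier-hull} as the barrier in place of horospheres and totally geodesic hyperplanes.

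\textbf{Approximating problems.} Fix a geodesic defining function $r$, which is $C^2$ by the $C^3$ hypothesis. For small $\varepsilon>0$ let $\Sigma_\varepsilon=\{r=\varepsilon\}\cong\partial M$, and push $A$ inward along the normal flow to obtain a cycle $A_\varepsilon\subset\Sigma_\varepsilon$. Pushing $Q$ the same way gives $Q_\varepsilon$ with $\partial Q_\varepsilon=A_\varepsilon$, so $A_\varepsilon$ bounds in $\{r\ge\varepsilon\}$.
\begin{enumerate}[label=(\arabic*)]
\item Minimize $g$-mass among integral currents in the compact region $\{r\ge\varepsilon\}$ with boundary $A_\varepsilon$, obtaining $T_\varepsilon$.
\item By convexity of the level sets near infinity (positive Hessian of the barrier functions), $T_\varepsilon$ is minimizing in all of $M$ relative to its boundary.
\item Apply Theorem~\ref{thm:barrier-hull}, or rather its proof applied to the compact-boundary problem, to confine $\spt T_\varepsilon$ to a neighborhood of $\mathcal{H}(\spt A)$ that shrinks to $\spt A$ at infinity.
\end{enumerate}

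\textbf{Mass control and limit.} The key estimate is a uniform mass bound for the compactified currents $\overline{T}_\varepsilon:=T_\varepsilon+R_\varepsilon$, where $R_\varepsilon$ is the collar piece swept from $A$ to $A_\varepsilon$, measured in $\bar g=r^2g$. Since $|\cdot|_{\bar g}=r^{p}|\cdot|_g$ on $p$-vectors, I will use a coarea and monotonicity argument in the compactified metric. Because $A_\varepsilon$ is minimizing-comparable, slicing by $r$ and comparing $T_\varepsilon\mres\{r\le s\}$ with the cone or collar over the slice $\langle T_\varepsilon,r,s\rangle$ gives, in hyperbolic-like coordinates, an isoperimetric-type inequality. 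This yields $\Mass_g(T_\varepsilon\mres\{r\ge s\})\le C s^{1-p}$, hence $\Mass_{\bar g}(T_\varepsilon)\le C$ uniformly, with $C$ depending on $\Mass_{\bar g}(A)$ and $\Mass_{\bar g}(Q)$. Federer--Fleming compactness then gives $\overline{T}_{\varepsilon_k}\to\overline{T}$ in $\I_p(\overline{M})$ with $\partial\overline{T}=A$. The same slice estimate gives $\Mass_{\bar g}(\overline{T}\mres\{r<s\})\to0$ as $s\to0$, so $\overline{T}$ carries no mass on $\partial M$.

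\textbf{Properties (a)--(c).} Property (a) holds because $\spt A\subset\partial M$. Property (b) follows since local minimizing passes to limits in compact subsets of $M$, where $g$ and $\bar g$ are comparable. For (c), $\partial_\infty\spt T\subseteq\spt A$ comes from the barrier confinement together with Theorem~\ref{thm:barrier-hull}. For the reverse inclusion and $T\neq0$, $\partial\overline{T}=A\neq0$ and the absence of mass on $\partial M$ force $\spt\overline{T}\supseteq\spt A$, so every point of $\spt A$ is a limit of points of $\spt T$; in particular $T\neq0$.

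\textbf{Main obstacle.} The step I expect to be hardest is the uniform $\bar g$-mass bound near infinity, since there is no global curvature sign. I would first establish a boundary monotonicity formula for $T_\varepsilon$ relative to $\bar g$ in the collar $\{r<r_0\}$. There the barrier Hessian estimates and the asymptotics $\sec\to-1$ give the needed convexity, and I would patch this to a compact interior bound.
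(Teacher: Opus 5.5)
Your overall architecture is the paper's: minimizers on the compact cores $O_t=\{x\ge e^{-t}\}$, and global minimality via the $1$-Lipschitz radial retraction, which is the precise form of your step (2). Then barrier exclusion via Lemma~\ref{lem:barrier-exclusion}, Federer--Fleming compactness, decay to exclude mass on $\partial M$, and the support identity for (c). The gap is the estimate you flag yourself, which you assert rather than derive.

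Write $x$ for your defining function and $r=-\log x$ as in the paper. What you need is $\norm{T_t}_g(O_s)\le Ce^{qs}$ with $q=p-1$, uniformly in $t\ge s$. Everything else follows from it: the uniform $\widetilde g$-mass bound, the decay $\norm{T_t}_{\overline g}(\{r>R\})\le Ce^{-R}$ that rules out mass on $\partial M$, and the local mass bounds behind (b). A priori you only know $\Mass_g(T_t)\le Ce^{qt}$ from the cylinder competitor, and nothing yet prevents that mass from sitting where $x^p$ is not small.

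The tools you name do not supply this localization as stated:
\begin{itemize}
\item There is no pole, so there are no cone competitors.
\item $T_\varepsilon\mres\{r\le s\}$ in your notation is the outer shell, with boundary $A_\varepsilon-\langle T_\varepsilon,r,s\rangle$. A collar over the slice does not compete with it, and an upper bound on the outer piece says nothing about the inner one.
\item $T_\varepsilon$ is stationary for $g$, not $\overline g$, so any monotonicity must come from the $g$-first variation.
\item The barrier bound $\Hess u_\psi\ge\frac12u_\psi g$ is too lossy to produce the rate $q$.
\end{itemize}

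The missing idea is that the rate must be sharp. Any differential inequality for $\mu(s)=\norm{T_t}_g(\{r<s\})$ gets integrated down from $s=t$, where only $\mu\le Ce^{qt}$ is known. So it must read $\mu\le(\tfrac1q+\beta(s))\mu'$ with $\int^\infty\beta<\infty$. An inequality $\mu\le K\mu'$ with $K>1/q$ only gives $\mu(s)\le Ce^{qt-(t-s)/K}$, which is not uniform in $t$.

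The paper gets the sharp rate in two steps.
\begin{enumerate}
\item In the collar it uses $\Hess r\ge(1-C_0e^{-r})g$ on $TN_r$ and $\Hess r(\partial_r,\cdot)=0$. These give $\dvg_P(w\nabla r)\ge qw$ for $w=\exp(-qC_0e^{-r})$, hence monotonicity of $e^{-qs}\int_{\{a<r<s\}}w\,d\norm{T_t}_g$ (Lemma~\ref{lem:weighted-collar-monotonicity}).
\item Nothing is convex in the core, so there it picks a slice in $(a,a+1)$ of uniformly bounded mass. It fills that slice in $O_{a+1}$ with uniformly bounded mass, using Federer--Fleming compactness plus continuity of least filling mass, and compares with $T_t$ (Proposition~\ref{prop:uniform-g-mass}).
\end{enumerate}

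Your isoperimetric idea can be rescued, but only in sharp form. Compare the inner piece $T_t\mres\{r<s\}$ with the radial cylinder carrying the slice from $N_s$ inward to $N_a$. Because $\partial_rG_r\ge2(1-C_0e^{-r})G_r$, its mass is at most $(\tfrac1q+O(se^{-s}))\Mass_g(\langle T_t,r,s\rangle)$. Close it off in $O_a$ by a filling whose mass is linear in that of the pushed cycle, which bounds $(f_a)_\#(T_t\mres\{r<s\})$ and has mass $O(e^{-qs})\Mass_g(\langle T_t,r,s\rangle)$. Such a linear filling is available in a compact Lipschitz neighborhood retract via the deformation theorem. This yields $\mu\le(\tfrac1q+\beta)\mu'$ and handles the core and the collar at once.
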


In particular, although much more general prescribed boundary is permitted, $A$ may be taken to be a smooth closed oriented immersed submanifold of $\partial M$. Theorem \ref{thm:asymptotic-plateau} solves the oriented asymptotic Plateau problem for every nonzero integral cycle on $\partial M$ which bounds in the compactification. In codimension one, standard regularity theory provides the following consequences.

\begin{corollary}\label{cor:codim-one}
    In the setting of Theorem \ref{thm:asymptotic-plateau}, assume further that $p=n$. Write $\Reg T$ for the regular part of $\spt T$ and $\Sing T:=\spt T\setminus\Reg T$. Then $\Reg T$ is a smooth embedded minimal hypersurface, and the multiplicity is a locally constant positive integer on each connected component of $\Reg T$. Furthermore,
    \begin{enumerate}[label=\textup{(\alph*)}]
        \item if $n \leq 6$, then $\spt T$ is a complete properly embedded smooth minimal hypersurface in $(M,g)$;
        \item if $n=7$, then $\Sing T$ is locally finite;
        \item if $n \geq 8$, then $\Sing T$ has Hausdorff dimension at most $n-7$.
    \end{enumerate}
\end{corollary}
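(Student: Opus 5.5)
The plan is to reduce the corollary to the interior regularity theory for codimension-one locally area minimizing integral currents. By Theorem~\ref{thm:asymptotic-plateau}, $T=\overline{T}\mres M$ is an integral $n$-current in the $(n+1)$-manifold $(M,g)$ with $\partial T=0$, and it is locally absolutely area minimizing. Since $g$ is $C^3$ in the interior (in fact $C^2$ suffices for the first regularity step, and smoothness of $\Reg T$ is to be understood relative to the metric's regularity, upgraded by elliptic theory for the minimal surface equation), the local theory applies in any coordinate ball $B\Subset M$. There, $T\mres B$ minimizes mass among currents with the same boundary in $B$.

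First I invoke the De Giorgi--Federer--Simons regularity theory for codimension-one area minimizing currents in Riemannian manifolds. Federer's theorem, in the Riemannian version via Almgren and Schoen--Simon, shows that $\spt T\cap B$ is a smooth embedded minimal hypersurface away from a closed set $\Sing T$. It also gives the dimension bounds:
\begin{itemize}
\item $\Sing T=\emptyset$ if $n\le 6$;
\item $\Sing T$ is discrete if $n=7$;
\item $\dim_{\mathcal H}\Sing T\le n-7$ if $n\ge 8$.
\end{itemize}
This reads off items (b) and (c) directly, since local finiteness and the dimension bound are local properties. For the multiplicity statement, I use the constancy theorem on each connected component of $\Reg T$. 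Because $\partial T=0$ and $T$ restricted to a neighborhood of a regular point is supported in a connected embedded hypersurface, $T=\theta\llbracket\Reg T\rrbracket$ locally with $\theta$ a constant integer. The sign can be absorbed into the orientation, giving a positive integer. Strictly, one should note that minimizing currents in codimension one decompose into nested boundaries of minimizing sets, so the regular part is locally a union of smooth sheets, each carrying integer multiplicity.

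For (a), with $n\le 6$ the support $\spt T$ is a smooth embedded minimal hypersurface without boundary in $M$, hence complete as a submanifold in the induced topology. Properness is the remaining point, and I expect it to be the only mildly nontrivial step. I would argue as follows. $\spt T$ is closed in $M$ as the support of a current. The local mass bound from minimality (comparison with cones, or monotonicity) prevents accumulation of sheets inside any compact $K\subset M$. Together these show $\spt T\cap K$ is compact and consists of finitely many sheets. Completeness of the induced metric then follows because a closed embedded submanifold of a complete manifold is complete; $(M,g)$ is complete since it is conformally compact.

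The main obstacle is citing the regularity theory in the correct Riemannian, $C^3$-metric form and handling the multiplicity and sheet structure cleanly. Everything else is local and follows from Theorem~\ref{thm:asymptotic-plateau}(a),(b).
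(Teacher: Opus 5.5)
Your proposal is correct and follows the paper's proof. Both arguments cite the codimension-one regularity theory for the locally minimizing cycle $T$, apply the constancy theorem on components of $\Reg T$, and obtain (a) from closedness of $\spt T$ together with completeness of the conformally compact metric $g$. Properness is immediate once $\spt T$ is a closed embedded hypersurface, so your mass-bound and ``finitely many sheets'' discussion is not needed; likewise there is no need to hedge about metric regularity, since $g$ is smooth in the interior.
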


In the hypersurface case, Theorem \ref{thm:asymptotic-plateau} supplies a rigorous existence theorem and a cutoff limit along a subsequence for an anchored Plateau problem closely related to the minimizing step on a fixed slice in maximin and quantum maximin constructions of holographic entanglement entropy in AdS/CFT. See Remark \ref{rem:fixed-slice}.

Minimal submanifolds with prescribed conformal boundary are also needed in the Poincar\'e--Einstein renormalized area program of Graham--Witten \cite{GrahamWitten1999}. Graham--Reichert study formal asymptotics for submanifolds which are minimal to high order at infinity \cite{GrahamReichert2020}, while the recent Gauss--Bonnet formula of Case--Graham--Kuo--Tyrrell--Waldron assumes a polyhomogeneous minimal immersion \cite{CaseGrahamKuoTyrrellWaldron2025}. Theorem \ref{thm:asymptotic-plateau} complements this theory by providing an existence theorem for the underlying oriented Plateau problem without assuming Einstein.

Anderson's seminal 1982 theorem solved the asymptotic Plateau problem in hyperbolic space \cite{Anderson1982}. His theorem is significant both in its generality, for it allows boundary data in arbitrary dimension and codimension, and in its contrast with Bernstein-type results in Euclidean space. His proof uses radial flow to large geodesic spheres, cone competitors whose mass can be computed explicitly, geodesic ball monotonicity, and totally geodesic half-spaces as barriers.

In subsequent work, Anderson introduced a very elegant scalloping construction in complete simply connected manifolds with pinched negative sectional curvature. It produces global convex sets with controlled asymptotic boundary and yields the convex hull identity for arbitrary closed subsets of the asymptotic boundary, which in hyperbolic space follows directly from totally geodesic half-spaces \cite{Anderson1983Dirichlet}. Jacobi field comparison gives the local convexity used to construct the scallops, while Rauch comparison gives the angular control needed to identify the asymptotic boundary of the resulting convex set. Heuristically, negatively curved space is \emph{so} convex that one can remove chunks from geodesic balls in the right way and the result remains totally convex. Both arguments leverage the global distance functions and geodesic convexity provided by constant and pinched negative sectional curvature.

There are broader existence results under other global assumptions. Lang solved the asymptotic Plateau problem for a large class of limit sets in Gromov hyperbolic Hadamard manifolds of bounded geometry \cite{Lang2003}. Casteras--Holopainen--Ripoll obtained results on Cartan--Hadamard manifolds under the strict convexity condition at infinity, together with curvature or coefficient hypotheses \cite{CHR2018}. We articulate a more detailed comparison with those results at the end of Section \ref{sec:convex-sets}. Anderson's constructions, and these other strategies, do not transfer directly to the present setting.

Section \ref{sec:preliminaries} fixes collar coordinates and GMT conventions. Section \ref{sec:convex-sets} proves the barrier and convex hull results and compares them with earlier notions of convexity at infinity. Sections \ref{sec:compact-plateau-and-mass-estimates} and \ref{sec:passage-to-limit} carry out the Plateau construction.

\medskip

\noindent\textbf{Acknowledgments.} The author was partially supported by the National Science Foundation under Grant No.\ DMS-2304966, and is grateful to Lan-Hsuan Huang for helpful discussions. 

\section{Preliminaries}\label{sec:preliminaries}

Let $(M^{n+1}, g)$ be $C^2$ conformally compact, so there exists a smooth defining function $\rho$ such that $\rho^2g$ extends to a nondegenerate $C^2$ metric for $\overline{M}$. Throughout, $M$ and $g$ are smooth in the interior. The asymptotic boundary of $A\subset M$ is
\[
    \partial_\infty A:=\overline{A}^{\,\overline{M}}\cap\partial M,
\]
where $\overline{A}^{\,\overline{M}}$ denotes closure in $\overline{M}$. We say that $(M,g)$ is \emph{asymptotically locally hyperbolic} if $|d\rho|_{\rho^2g} = 1$ on $\partial M$. This condition is independent of the choice of defining function.

The use of \emph{hyperbolic} in the name can be explained by the conformal curvature formula. In coordinates smooth up to the boundary, it gives
\[
    R_{ijkl} = -|d\rho|^2_{\rho^2g}(g_{ik}g_{jl} - g_{il}g_{jk}) + O(\rho^{-3}).
\]

It is important to note that this is only an asymptotic curvature condition, and that no global sign or pinching condition on the sectional curvature is assumed. In particular, the curvature of the compact part is unrestricted, and the conformal boundary need not be spherical. The present work also does not assume the Einstein equation or a polyhomogeneous expansion.

Poincar\'e--Einstein metrics, which satisfy $\operatorname{Ric}_g=-ng$, do form an important subclass. Graham and Lee constructed such metrics on the ball with conformal infinity close to the round conformal class \cite{GrahamLee1991}. Another standard family consists of the Birmingham--Kottler metrics \cite[Example~4]{ChruscielGalloway2021}. If $(N,h_k)$ has constant sectional curvature $k$ and is closed, its end metric
\[
    g_m=\frac{dR^2}{R^2+k-2mR^{1-n}}+R^2h_k
\]
is another valid model wherever the denominator is positive. With $\rho=R^{-1}$, its compactification is
\[
    \rho^2g_m=\frac{d\rho^2}{1+k\rho^2-2m\rho^{n+1}}+h_k,
\]
so the defining condition is immediate.

\begin{remark}\label{rem:ah-alh-terminology}
Terminology for asymptotically hyperbolic and asymptotically locally hyperbolic varies across the literature. Modulo regularity, the condition imposed here is the one Graham--Lee and Chru{\'s}ciel--Herzlich call \emph{asymptotically hyperbolic} \cite{GrahamLee1991, ChruscielHerzlich2003}. Andersson--Dahl use \emph{asymptotically locally hyperbolic} for ends strongly asymptotic to finite quotients of hyperbolic space and reserve \emph{asymptotically hyperbolic} for trivial quotients~\cite{AnderssonDahl1998}. Chru{\'s}ciel--Galloway call a smooth conformally compact metric \emph{asymptotically locally hyperbolic} when all its sectional curvatures tend to $-1$ at conformal infinity, and call a component of its conformal boundary at infinity, or the corresponding end, \emph{asymptotically hyperbolic} when the conformal metric on that component is conformal to a round sphere \cite{ChruscielGalloway2021}. \v{C}ap--Gover use the boundary condition imposed here as their definition of \emph{asymptotically locally hyperbolic} and note both conventions \cite[Remark~2.3]{CapGover2024}. Lee--Neves use \emph{asymptotically locally hyperbolic} for a different class in three dimensions. They require a constant curvature metric at conformal infinity, an asymptotic expansion defining a mass aspect function, and quantitative decay, and use \emph{asymptotically hyperbolic} when conformal infinity is the round sphere \cite{LeeNeves2015}.
\end{remark}

If $h=\rho^2g$ is a compactification, then $h|_{T\partial M}$ depends on the defining function, but its conformal class does not. We say unambiguously, then, that this class is the conformal infinity of $(M,g)$. For the proof of Theorem \ref{thm:asymptotic-plateau}, we take $h$ to be $C^3$ and set $h_0:=h|_{T\partial M}$. The geodesic defining function $x$ is uniquely determined in a collar by
\[
    |dx|_{x^2g}=1, \qquad
    (x^2g)|_{T\partial M}=h_0.
\]
The Graham--Lee normal form gives a unique such function in a sufficiently small collar \cite[Lemma~5.2]{GrahamLee1991}. Since $h$ is $C^3$, the geodesic compactification associated with $h$ is $C^2$ up to the boundary \cite[Section~1]{Anderson2003Boundary}. Choose $\varepsilon_0>0$ so that the geodesic normal form is defined on the collar $\{0<x<2\varepsilon_0\}$. Extend $x$ to a positive Lipschitz function on $M$ without changing it on this collar, and so that $x\geq2\varepsilon_0$ off the collar. In geodesic normal coordinates $(x, y^1,\ldots,y^n)$, we can write
\[
    \overline{g} := x^2g = dx^2+h_x, \qquad
    g = \frac{dx^2 + h_x}{x^2}
\]
in the collar neighborhood, where $h_x$ is a family of metrics on the boundary with $h_x|_{x=0} = h_0$. Setting $r=-\log x$, we have, in the collar,
\[
    g = dr^2 + e^{2r}g_{r},
\]
with $g_r := h_{e^{-r}}$. With this extension, $r$ is a proper Lipschitz function on all of $M$. Let $r_0 := -\log{\varepsilon_0}$. For $R\geq r_0$, set $O_R:=\{r\leq R\}$ and $N_R:=\partial O_R=\{r=R\}$. Each $O_R$ is a \emph{compact core} of $M$. Equivalently, a compact core is the complement of a collar neighborhood of conformal infinity.

We use the two defining functions for different purposes. The smooth function $\rho$ and the compactification $h=\rho^2g$ will be used for the barrier construction. The variables $x$ and $r=-\log x$ will be used for radial retraction, slicing, and the mass estimates.

For large enough $r_0$, we have uniformly on the collar that
\[
    |g_r - h_0| \leq Ce^{-r}, \qquad
    \partial_r g_r = -e^{-r}\,(\partial_x h_x)\big|_{x = e^{-r}} = O(e^{-r}),
\]
where $C$ depends only on $(\overline{M}, \overline{g}, x)$. In particular, we can have $\frac{1}{2}h_0 \leq g_r \leq 2h_0$ for large enough $r_0$.

On every finite annulus bounded away from $x=0$, the normal form coordinate $r$ is $C^2$. Each finite level $N_R$ is a $C^2$ hypersurface, $O_R$ is a compact Lipschitz neighborhood retract, and the fields $w(r)\nabla r$ used below are $C^1$ wherever they are supported.

Using the convention $\Delta_g f=\tr_g \Hess_g f$, we say that a function $f\in C^2_{\mathrm{loc}}(M)$, not identically zero, is a \emph{static potential} if
\[
    L_g^* f=-(\Delta_g f)g+\Hess_g f-f\operatorname{Ric}_g=0,
\]
where $L_g^*$ is the formal $L^2$-adjoint of the linearized scalar curvature operator \cite[Section~3]{HuangJangMartin2020}. If $\operatorname{Ric}_g=-ng$, the static equation gives $\Hess_g f=fg$.

For geometric measure theory conventions and basic facts about currents and rectifiable varifolds, we follow the notes of Simon \cite{Simon2016}, with the usual passage via charts from Euclidean open sets to smooth Riemannian manifolds. Let $N$ be a smooth Riemannian manifold without boundary. We write $\I_{k,\mathrm{loc}}(N)$ and $\I_k(N)$ for the locally integral and integral $k$-currents, respectively, $\norm{T}_g$ for the mass measure of a current, and $\Mass_g(T)$ for its total mass. All currents have coefficients in $\mathbb{Z}$, and $T\mres A$ denotes restriction to a Borel set $A$.

For a rectifiable $k$-varifold $V$, we write $\mu_V$ for its weight measure, $T_xV$ for its approximate tangent plane, and $\spt V:=\spt\mu_V$. If $X\in C_c^1(N;TN)$ and $P\subset T_xN$ is a $k$-plane with orthonormal basis $(e_1,\ldots,e_k)$, set
\[
    \dvg_P X:=\sum_{i=1}^k\ip{\nabla_{e_i}X}{e_i}.
\]
The first variation of $V$ is
\[
    \delta V(X):=\int\dvg_{T_xV} X\,d\mu_V(x).
\]
The varifold is \emph{stationary} in an open set $U\subset N$ if $\delta V(X)=0$ for every $X\in C_c^1(U;TN)$. For an integer multiplicity rectifiable current $T$, its associated varifold $|T|$ drops orientation but satisfies
\[
    \mu_{|T|}=\norm{T}_g, \qquad
    \spt|T|=\spt T.
\]

We say that $T\in\I_{k,\mathrm{loc}}(N)$ is \emph{locally absolutely area minimizing} if, whenever $S\in\I_{k,\mathrm{loc}}(N)$ satisfies $\partial S=\partial T$ and $S-T$ has compact support, one has
\[
    \norm{T}_g(W)\leq\norm{S}_g(W)
\]
for every relatively compact open set $W$ containing $\spt(S-T)$. If $T$ is locally absolutely area minimizing in an open set $U$ and $U\cap\spt\partial T=\emptyset$, then $|T|$ is stationary in $U$.

Now let $k\geq2$, $T\in\I_{k,\mathrm{loc}}(N)$, and let $f$ be locally Lipschitz. For a.e.\ $s\in\R$, the slice
\[
    \langle T,f,s\rangle :=\partial\bigl(T\mres\{f<s\}\bigr)-(\partial T)\mres\{f<s\}
\]
is a locally integral $(k-1)$-current. Write $\nabla^Tf$ for the approximate tangential gradient of $f$. For every open set $W\subset N$ and every $a<b$, the integrated slicing identity is
\[
    \int_a^b\norm{\langle T,f,s \rangle}_g(W)\,ds =\int_{W\cap\{ a < f < b\}}|\nabla^Tf|_g\,d\norm{T}_g.
\]

If $F:(N,g)\to(N',g')$ is locally Lipschitz and proper on $\spt T$, then
\[
    \partial(F_\#T)=F_\#(\partial T), \qquad
    \spt(F_\#T)\subseteq F(\spt T).
\]
If, in addition, $F$ is $L$-Lipschitz on $\spt T$, then
\[
    \Mass_{g'}(F_\#T)\leq L^k\Mass_g(T).
\]

To consider current boundaries on $\partial M$, fix a smooth closed manifold $\widetilde{M}$ containing $\overline{M}$ as a compact domain, together with a smooth metric $\widetilde{g}$ which is uniformly equivalent to $\overline{g}$ on $\overline{M}$. We identify currents on $\partial M$ with their pushforwards under inclusion, write $\I_k(\overline{M})$ for the integral currents in $\widetilde{M}$ supported in $\overline{M}$, and denote the corresponding ambient flat norm by $\mathcal{F}_{\widetilde{g}}$. For smooth submanifold boundary data, Anderson similarly regards the ball model as a domain in $\R^n$ and establishes the corresponding boundary current identity for the minimizing current constructed in his proof \cite[Proposition~6]{Anderson1982}.

Finally, as measures on the interior collar $\{0<x<\varepsilon_0\}$, conformal scaling gives
\[
    d\norm{Z}_{\overline{g}}=x^k\,d\norm{Z}_g=e^{-kr}\,d\norm{Z}_g
\]
for every rectifiable current $Z$ of dimension $k$ in $M$.

\section{Barrier hulls and convexity at conformal infinity}\label{sec:convex-sets}

In hyperbolic space, totally geodesic half-spaces separate arbitrary closed subsets of the boundary sphere. Anderson's scalloping gives analogous separation at infinity under pinched negative curvature \cite{Anderson1983Dirichlet}. Our construction proceeds in the opposite direction: we choose a function $\psi$ on $\partial M$ so that $\overline{\{\psi>0\}}$ prescribes the asymptotic boundary of the superlevel region, extend it along a collar in $\overline{M}$, and take a sufficiently large level of $u_\psi=\phi_\psi/\rho$. Strict convexity of this function on the resulting superlevel region makes the complement totally convex.

We begin with the computation in hyperbolic space, where all sectional curvatures equal $-1$. We use the upper half-space model,
\[
    \mathbb{H}^{n+1} = \{(z,y): z > 0, y \in \R^n\}, \qquad
    g_H = z^{-2}(dz^2 + |dy|^2).
\]
Let
\[
    u_\psi(z,y)=\frac{\psi(y)}{z}.
\]
In this metric, we can compute the nonzero Christoffel symbols to be
\[
    \Gamma_{zz}^z = -z^{-1},\qquad
    \Gamma_{ij}^z = z^{-1}\delta_{ij}, \qquad
    \Gamma_{zj}^i = -z^{-1}\delta^i_j.
\]
Since
\[
    \partial_z u_\psi = -\frac{\psi}{z^2}, \qquad
    \partial_i u_\psi = \frac{\psi_i}{z},
\]
substituting into the coordinate formula for the Hessian $u_{;ab} = \partial_a \partial_b u - \Gamma^c_{ab}\partial_c u$ gives
\[
    (u_{\psi})_{;zz} = \frac{\psi}{z^3} = u_\psi (g_{H})_{zz}, \qquad
    (u_\psi)_{;zi} = 0,\qquad
    (u_{\psi})_{;ij} = u_{\psi}(g_H)_{ij} + \frac{\psi_{ij}}{z}.
\]
If we specify the orthonormal frame $e_0 = -z\partial_z$ and $e_i = z\partial_{y_i}$, we get
\[
    \Hess_H u_\psi(e_0, e_0) = u_\psi, \qquad
    \Hess_H u_\psi(e_0,e_i) = 0, \qquad
    \Hess_H u_{\psi}(e_i,e_j) = u_\psi \delta_{ij} + z \psi_{ij}.
\]
When $\psi$ is affine, its Euclidean Hessian in the boundary coordinates vanishes, so $u_\psi$ is a static potential. For general $\psi$, the error in the static equation is $zD^2\psi$ in the tangential orthonormal directions, so
\[
    |\Hess_H u_\psi - u_\psi g_H|_{g_H} = z|D^2\psi|_{\delta}.
\]
The factor $z$ makes this error small on sufficiently large positive level sets and yields the following threshold. Define
\[
    Q_\psi := \sup_{\{\psi > 0\}} \psi(-\lambda_{\min}(D^2\psi))_+.
\]
Assume that $Q_\psi < \infty$. On $\Omega := \{u_\psi > c\}$ we have $z < \psi / c$ by construction, so we get the estimate
\[
    z^2(-\lambda_{\min}(D^2\psi))_+ \leq \frac{Q_\psi}{c^2}\psi.
\]
Then if we let $0 \leq \kappa < 1$, and take $c$ such that
\[
    c^2 \geq \frac{Q_\psi}{1-\kappa},
\]
this implies
\[
    \Hess_H u_\psi \geq \kappa u_\psi g_H
\]
on $\Omega$. For a tangible example, take
\[
    \psi_R(y) = \frac{R^2 - |y|^2}{2}.
\]
We have $\Omega_{\psi_R,c} = \{u_{\psi_R} > c\} = \{|y|^2 + 2cz < R^2\}$. The boundary of this set is a paraboloid. If we further let
\[
    \zeta := \frac{R^2 - |y|^2 - z^2}{2z} = u_{\psi_R} - \frac{z}{2},
\]
we can see that $\zeta$ is a static potential related to $u_{\psi_R}$ by the error term $z/2$. If we choose $R = 1$ and set $E:=\Hess_H u_{\psi_1}-u_{\psi_1}g_H$, then $D^2\psi_1 = -I$, and
\[
    E_{zz} = E_{zi} = 0, \qquad
    E_{ij} = -z^{-1}\delta_{ij}, \qquad
    |E|_{g_H} = \sqrt{n} z.
\]

We now return to the smooth defining function $\rho$ fixed in the preliminaries and to the general setting of asymptotically locally hyperbolic manifolds. In the upper half-space model, one may take $\rho=x=z$, and the function constructed below is then exactly $u_\psi=\psi/z$ from the preceding computation.

Let $a_\rho=|d\rho|_h^2$. Choose $\rho_0>0$ so that $\{0\leq\rho<\rho_0\}$ is a collar on which $a_\rho>0$, and set $W=\nabla^h\rho/a_\rho$. Then $d\rho(W)=1$, and the flow of $W$ identifies this collar with $[0,\rho_0)\times\partial M$. Let $\pi_\rho$ denote the resulting projection onto $\partial M$. For $\psi\in C^2(\partial M)$ let
\[
    \phi_\psi=\psi\circ\pi_\rho, \qquad
    u_\psi=\frac{\phi_\psi}{\rho}.
\]
Then $d\phi_\psi(W)=0$. If $\psi$ is positive somewhere and $c>0$, write
\[
    \Omega_{\psi,c}:=\{u_\psi>c\}\subseteq(0,\rho_0)\times\partial M.
\]
On this collar, we have
\begin{equation}\label{eq:omega-coordinates}
    \Omega_{\psi,c}
    =\{(\rho,y):\psi(y)>0,\ 0<\rho<\psi(y)/c\}.
\end{equation}
Since $du_\psi(W)=-u_\psi/\rho$, every positive level is a $C^2$ hypersurface. Thus \eqref{eq:omega-coordinates} gives an explicit collar description of $\Omega_{\psi,c}$ and determines its asymptotic boundary. It also shows
\[
    \Omega_{\psi,c}\subseteq\{\rho<(\max\psi)/c\}, \qquad
    \pi_\rho(\Omega_{\psi,c})\subseteq\{\psi>0\}.
\]

Several explicit choices of $\psi$ may help illustrate the construction. If, say, $\psi$ is a positive constant, then $\Omega_{\psi,c}$ is a collar neighborhood of $\partial M$. If $U\subseteq\partial M$ is open and $\spt\psi\subseteq U$, then the projection of $\Omega_{\psi,c}$ to $\partial M$ is contained in $U$. 

\begin{lemma}\label{lem:almost-static-potentials}
    There is a constant $C$ such that, for every $\psi\in C^2(\partial M)$,
    \[
        \Hess_g u_\psi=u_\psi g+E, \qquad
        |E|_g\leq C\rho\bigl(\norm{\psi}_{C^2(\partial M)}+|u_\psi|\bigr).
    \]
\end{lemma}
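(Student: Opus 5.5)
We prove the lemma with the conformal change formula for the Hessian. Write $g=\rho^{-2}h$ with $h$ a $C^2$ metric on $\overline{M}$. For any $C^2$ function $f$ and $g=e^{2\omega}h$ with $\omega=-\log\rho$, the Levi-Civita connections differ by
\[
    \nabla^g_XY=\nabla^h_XY+X(\omega)Y+Y(\omega)X-h(X,Y)\nabla^h\omega .
\]
Hence
\[
    \Hess_g f=\Hess_h f-d\omega\otimes df-df\otimes d\omega+h(\nabla^h\omega,\nabla^h f)\,h .
\]
Substituting $d\omega=-d\rho/\rho$ gives
\[
    \Hess_g f=\Hess_h f+\rho^{-1}\bigl(d\rho\otimes df+df\otimes d\rho\bigr)-\rho^{-1}h(\nabla^h\rho,\nabla^h f)\,h .
\]
We apply this with $f=u_\psi=\phi_\psi\rho^{-1}$ and work on the collar $\{0<\rho<\rho_0\}$. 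Since $\Omega_{\psi,c}$ lies in this collar, that is where the lemma is needed; away from the collar we may enlarge $C$ using compactness.

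The key steps are as follows.

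\textbf{Step 1.} Expand $du_\psi=\rho^{-1}d\phi-\phi\rho^{-2}d\rho$ and
\[
    \Hess_h u_\psi=\rho^{-1}\Hess_h\phi-\rho^{-2}(d\rho\otimes d\phi+d\phi\otimes d\rho)+2\phi\rho^{-3}d\rho\otimes d\rho-\phi\rho^{-2}\Hess_h\rho .
\]

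\textbf{Step 2.} Insert these into the conformal formula and collect terms by their power of $\rho$.
\begin{itemize}
    \item The $\rho^{-3}$ terms are $2\phi\rho^{-3}d\rho^2-2\phi\rho^{-3}d\rho^2+\phi\rho^{-3}|d\rho|_h^2h=u_\psi a_\rho\,\rho^{-2}h=u_\psi a_\rho\, g$.
    \item The mixed $\rho^{-2}(d\rho\otimes d\phi)$ terms cancel exactly.
    \item One term $-\rho^{-2}h(\nabla^h\rho,\nabla^h\phi)h$ survives. It vanishes because $d\phi(W)=0$ and $W$ is parallel to $\nabla^h\rho$, so $h(\nabla^h\rho,\nabla^h\phi)=a_\rho\,d\phi(W)=0$. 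This is the point of building $\phi_\psi$ by the flow of $W$ rather than by an arbitrary extension.
\end{itemize}
The result is
\[
    \Hess_g u_\psi=u_\psi\,a_\rho\,g+\rho^{-1}\Hess_h\phi-u_\psi\rho^{-1}\Hess_h\rho .
\]

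\textbf{Step 3.} Estimate the error
\[
    E=u_\psi(a_\rho-1)g+\rho^{-1}\Hess_h\phi-u_\psi\rho^{-1}\Hess_h\rho .
\]
For a symmetric $2$-tensor $B$ we have $|B|_g=\rho^2|B|_h$. This gives:
\begin{itemize}
    \item $|\rho^{-1}\Hess_h\phi|_g=\rho|\Hess_h\phi|_h\le C\rho\norm{\psi}_{C^2}$, using that $\pi_\rho$ is $C^2$ up to the boundary;
    \item $|u_\psi\rho^{-1}\Hess_h\rho|_g\le C\rho|u_\psi|$, since $\rho$ is smooth and $h$ is $C^2$;
    \item $|u_\psi(a_\rho-1)g|_g=\sqrt{n+1}\,|u_\psi|\,|a_\rho-1|\le C\rho|u_\psi|$, since $a_\rho=1$ on $\partial M$ and $a_\rho$ is $C^1$ (indeed Lipschitz), so $|a_\rho-1|\le C\rho$.
\end{itemize}

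\textbf{Main obstacle.} The delicate point is regularity. With $h$ only $C^2$, the vector field $W=\nabla^h\rho/a_\rho$ is only $C^1$, so its flow and hence $\pi_\rho$ are a priori only $C^1$. Then $\Hess_h\phi$ need not be bounded, and the bound $\norm{\phi}_{C^2}\le C\norm{\psi}_{C^2}$ in Step 3 is not automatic. I would first attempt to obtain $C^2$ control of $\pi_\rho$ from the $C^2$ regularity of $h$, or justify it via the ODE for the flow. Failing that, I would replace the flow by one with the needed regularity, which means checking that the orthogonality $h(\nabla^h\rho,\nabla^h\phi)=0$ still holds, or holds up to $O(\rho)$ so that the surviving term stays within the error bound. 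The exact cancellations in Step 2 are the other thing to verify carefully.
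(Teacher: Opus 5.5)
Your proposal is correct and is essentially the paper's proof. It uses the same conformal Hessian formula, the same cancellation of the $\rho^{-3}$ and mixed $\rho^{-2}$ terms, the same use of $\ip{d\rho}{d\phi_\psi}_h=a_\rho\,d\phi_\psi(W)=0$, and the same scaling $|B|_g=\rho^2|B|_h$ to bound each piece of $E$.

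The regularity issue you flag as the main obstacle is not a real one. The gradient $\nabla^h\rho=h^{ij}\partial_j\rho\,\partial_i$ involves no derivatives of $h$, so $W$ is $C^2$, not merely $C^1$. Its flow, and hence $\pi_\rho$, is therefore $C^2$ up to $\partial M$, and the chain rule gives $\norm{\phi_\psi}_{C^2}\le C\norm{\psi}_{C^2}$. Then $\Hess_h\phi_\psi$ involves only the bounded Christoffel symbols of $h$, and this is exactly the bound the paper invokes.
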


\begin{proof}
    For any \(f\in C^2\), the conformal change \(g=\rho^{-2}h\) gives
    \[
        \Hess_g f =\Hess_h f+\rho^{-1}\bigl(d\rho\otimes df+df\otimes d\rho\bigr)-\rho^{-1}\ip{d\rho}{df}_h h.
    \]
    Taking \(f=u_\psi=\phi_\psi/\rho\), expanding \(\Hess_h(\phi_\psi/\rho)\), and subtracting \(u_\psi g\) yields
    \begin{align}
        E ={}& \rho^{-1}\Hess_h \phi_\psi - \phi_\psi\rho^{-2}\Hess_h \rho - \rho^{-2}\ip{d\rho}{d\phi_\psi}_h h + \phi_\psi\rho^{-3}(a_\rho - 1)h. \label{eq:static-error}
    \end{align}
    Observe that $\ip{d\rho}{d\phi_\psi}_h=a_\rho d\phi_\psi(W)=0$. Since $a_\rho=1$ on $\partial M$ and $h$ is $C^2$, $a_\rho-1=O(\rho)$. The Hessians of $\rho$ and $\phi_\psi$ with respect to $h$ are bounded, with the latter controlled by $\norm{\psi}_{C^2}$. Finally, the norm of a $(0,2)$-tensor scales like $|A|_g=\rho^2|A|_h$, and $\phi_\psi=\rho u_\psi$. Applying these facts to \eqref{eq:static-error} proves the estimate.
\end{proof}

\begin{remark}
    A natural question is whether some form of the converse holds. For instance, Huang--Jang--Martin prove that a complete asymptotically hyperbolic manifold in their class is hyperbolic if it admits a nonzero function $f\in C^2_{\mathrm{loc}}(M)$, bounded below, which satisfies
    \[
        \Hess f = f g
    \]
    everywhere \cite[Proposition~4.5]{HuangJangMartin2020}. However, our functions $u_\psi$ are defined only in a collar and satisfy this equation only to leading order on large positive superlevel sets. This comparison does, however, help explain the appearance of the leading hyperbolic term.
\end{remark}

\begin{proposition}\label{prop:totally-convex-complements}
    Fix $\psi$ that is positive somewhere. Then there is a $c_0=c_0(\psi)$ such that for every $c \geq c_0$, setting $\Omega = \Omega_{\psi, c}$, the following holds:
    \begin{enumerate}[label=\textup{(\alph*)}]
        \item $\Hess u_\psi \geq \frac{1}{2} u_\psi g > 0$ on $\Omega$;
        \item $M \setminus \Omega$ is closed and totally convex;
        \item $\partial_\infty \Omega = \overline{\{\psi > 0\}}$, and for $0 < \delta < \max \psi$, the set $\Omega$ contains $\{(\rho,y): \psi(y) \geq \delta, 0<\rho<\delta/c\}$.
    \end{enumerate}
\end{proposition}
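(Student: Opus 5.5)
Part (a) will follow from Lemma~\ref{lem:almost-static-potentials}. On $\Omega=\Omega_{\psi,c}$ we have $u_\psi>c>0$ and $\rho<\psi/c\le(\max\psi)/c$. The lemma gives
\[
    \Hess_g u_\psi\ge u_\psi g-C\rho\bigl(\norm{\psi}_{C^2}+u_\psi\bigr)g .
\]
It therefore suffices to arrange $C\rho\norm{\psi}_{C^2}\le\frac14 u_\psi$ and $C\rho\le\frac14$ on $\Omega$. The second holds once $C(\max\psi)/c\le\frac14$. For the first, note $u_\psi>c$, so it is enough that $C(\max\psi)\norm{\psi}_{C^2}/c\le c/4$. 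Both are achieved by taking $c_0$ large. We also choose $c_0$ large enough that $(\max\psi)/c_0<\rho_0$, so that $\Omega$ lies compactly inside the collar where $u_\psi$ is defined and $C^2$.

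Part (c) is read off from \eqref{eq:omega-coordinates}. If $\psi(y)\ge\delta$ and $\rho<\delta/c$, then $\rho<\psi(y)/c$, so the stated set lies in $\Omega$. Every $y$ with $\psi(y)>0$ is the limit of points $(\rho,y)\in\Omega$ as $\rho\to0$, and $\partial_\infty\Omega$ is closed, so $\overline{\{\psi>0\}}\subseteq\partial_\infty\Omega$. Conversely, let $(\rho_k,y_k)\in\Omega$ converge to a boundary point. Then $\rho_k\to0$ and $\psi(y_k)>0$. Since $\pi_\rho$ is continuous, $y_k\to y$ with $y\in\overline{\{\psi>0\}}$.

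For part (b), closedness is immediate: $\Omega$ is open in $M$, because $u_\psi$ is continuous on the open collar and $\overline\Omega\cap M$ stays in $\{\rho\le(\max\psi)/c\}$ away from $\rho=\rho_0$. For total convexity, let $\gamma:[0,L]\to M$ be a geodesic with endpoints in $M\setminus\Omega$, and suppose $\gamma(t_1)\in\Omega$ for some $t_1$. Let $(a,b)\ni t_1$ be the maximal interval with $\gamma((a,b))\subseteq\Omega$. Then $0\le a<b\le L$ and $\gamma(a),\gamma(b)\in\partial\Omega$. Since $\Omega$ is relatively compact in the collar together with $\partial M$ and geodesics stay in $M$, the function $f=u_\psi\circ\gamma$ is $C^2$ on a neighborhood of $[a,b]$ in the collar. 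By (a), $f''=\Hess u_\psi(\dot\gamma,\dot\gamma)>0$ on $(a,b)$, and $f(a)=f(b)=c$ by continuity. Strict convexity then forces $f\le c$ on $[a,b]$, contradicting $f(t_1)>c$.

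The main obstacle is this endpoint and continuity issue. We must check that $\gamma(a)$ and $\gamma(b)$ lie in $M$ where $u_\psi$ is still defined and equal to $c$, rather than escaping to infinity. This holds because $\gamma([0,L])$ is a compact subset of $M$. We also need $u_\psi$ to be $C^2$ on the closure of $\Omega$ in $M$, which is the reason for requiring $(\max\psi)/c_0<\rho_0$. Beyond that, everything reduces to choosing $c_0$ with the explicit constants from Lemma~\ref{lem:almost-static-potentials}.
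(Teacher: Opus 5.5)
Your proposal is correct and follows essentially the same route as the paper: it uses the same choice of $c_0$ via Lemma~\ref{lem:almost-static-potentials} (your two quarter-bounds amount to the paper's conditions $c\ge 4C\max\psi$ and $c^2\ge 4C\max\psi\,\norm{\psi}_{C^2}$, plus $\Omega$ lying inside the collar) and the same component-of-$\gamma^{-1}(\Omega)$ convexity argument for (b). For (c) it likewise reads everything off \eqref{eq:omega-coordinates}; your sequential argument via continuity of $\pi_\rho$ differs only cosmetically from the paper's choice of a neighborhood on which $\psi\le 0$.
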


\begin{proof}
    Set
    \[
        c_0 := \max\bigl\{ 2\rho_0^{-1}\max \psi, 4C\max \psi, 2\sqrt{C\max\psi \norm{\psi}_{C^2}}\bigr\}.
    \]
    Then $\Omega$ is contained in the collar. On $\Omega$, we have $u_\psi\geq c$ and $\rho\leq(\max\psi)/c$, so Lemma \ref{lem:almost-static-potentials} gives
    \[
        \frac{|E|_g}{u_\psi}\leq C\rho\left(1+\frac{\norm{\psi}_{C^2}}{u_\psi}\right)\leq C\frac{\max\psi}{c}\left(1+\frac{\norm{\psi}_{C^2}}{c}\right)\leq \frac{1}{2}.
    \]
    Thus $\Hess u_\psi\geq \frac{1}{2}u_\psi g$ on $\Omega$, proving (a).

    $\Omega$ is open by construction, and its closure in $M$ is contained in the collar where $u_\psi$ is defined. In particular, $u_\psi=c$ on $\partial\Omega$. Now let $\gamma:[0,1]\to M$ be a nonconstant geodesic segment with endpoints in $M\setminus \Omega$, and suppose by way of contradiction that $\gamma$ leaves $M\setminus \Omega$. If $(a,b)$ is a component of $\gamma^{-1}(\Omega)$, then $(u_\psi \circ \gamma)(a) = (u_\psi \circ \gamma)(b) = c$. On the other hand, by (a), we have
    \[
        (u_\psi\circ\gamma)''=\Hess u_\psi(\dot\gamma,\dot\gamma)\geq \frac{1}{2}(u_\psi\circ\gamma)|\dot\gamma|^2_g>0
    \]
    on $(a,b)$, so strict convexity gives $u_\psi \circ \gamma < c$ there. This contradicts $u_\psi > c$ in $\Omega$. Thus $M\setminus \Omega$ is totally convex, proving (b).

    Let $q \in \partial M$ satisfy $\psi(q) > 0$ and consider the $W$-flow curve $\sigma_q(\rho)=(\rho,q)$. If $0<\rho<\psi(q)/c$, then $u_\psi(\sigma_q(\rho))=\psi(q)/\rho>c$. Letting $\rho\downarrow0$ gives $\{\psi > 0\} \subseteq \partial_\infty \Omega$, and $\overline{\{\psi > 0\}} \subseteq \partial_\infty \Omega$ because $\partial_\infty\Omega$ is closed in $\partial M$.

    Now let $p \in \partial M \setminus \overline{\{\psi > 0\}}$. There exists a neighborhood $U$ of $p$ on which $\psi \leq 0$. Then $u_\psi\leq0$ on $(0,\rho_0) \times U$, so this collar neighborhood misses $\Omega$. Thus $p \notin \partial_\infty \Omega $. Hence $\overline{\{\psi > 0\}} = \partial_\infty \Omega$.

    Finally, if $\psi(y)\geq\delta$ and $0<\rho<\delta/c$, then $u_\psi(\rho,y)>c$. 
\end{proof}

Let $\mathcal{C}$ be the family of all $\Omega_{\psi, c}$ with $0 \leq \psi \in C^\infty(\partial M)$, $\psi \not\equiv 0$, and $c \geq c_0(\psi)$.

\begin{corollary}\label{cor:barriers-in-neighborhoods}
    For every $q\in\partial M$ and every neighborhood $U$ of $q$ in $\overline{M}$, there are $\Omega\in\mathcal{C}$ and a neighborhood $U'$ of $q$ in $\overline{M}$ such that
    \[
        U'\cap M\subseteq\Omega, \qquad
        \overline{\Omega}^{\,\overline{M}}\subseteq U.
    \]
\end{corollary}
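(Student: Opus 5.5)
We use the collar coordinates $(\rho,y)\in[0,\rho_0)\times\partial M$ given by the $W$-flow, together with Proposition \ref{prop:totally-convex-complements}. Fix $q\in\partial M$ and a neighborhood $U$ of $q$ in $\overline{M}$. Since the collar map is a homeomorphism onto a neighborhood of $\partial M$ in $\overline{M}$, there are $\eta\in(0,\rho_0)$ and an open set $V\subseteq\partial M$ containing $q$ with
\[
    \{(\rho,y):0\le\rho<\eta,\ y\in\overline{V}\}\subseteq U ,
\]
and we may take $\overline{V}$ compact. Choose $0\le\psi\in C^\infty(\partial M)$ with $\spt\psi\subseteq V$ and $\psi\equiv1$ on a smaller open neighborhood $V'$ of $q$, using a smooth bump function in a coordinate chart. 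Then $\psi\not\equiv0$ and $\max\psi=1$.

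Next we pick $c\ge\max\{c_0(\psi),\,2/\eta\}$ and set $\Omega:=\Omega_{\psi,c}\in\mathcal{C}$. By the collar description \eqref{eq:omega-coordinates},
\[
    \Omega\subseteq\{(\rho,y):y\in\spt\psi,\ 0<\rho<1/c\}.
\]
Hence the closure of $\Omega$ in $\overline{M}$ lies in
\[
    \{(\rho,y):0\le\rho\le 1/c,\ y\in\spt\psi\}\subseteq\{0\le\rho\le\eta/2\}\times\overline{V}\subseteq U .
\]
This inclusion uses that $\spt\psi$ is compact, so the set on the left is closed in $\overline{M}$.

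For the inner neighborhood we take
\[
    U':=\{(\rho,y):0\le\rho<1/c,\ y\in V'\},
\]
which is open in $\overline{M}$ and contains $q$. If $(\rho,y)\in U'\cap M$, then $\rho>0$ and $\psi(y)=1$, so $u_\psi=1/\rho>c$. Therefore $U'\cap M\subseteq\Omega$. Equivalently, this is part (c) of Proposition \ref{prop:totally-convex-complements} with $\delta=1$.

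The only point needing care is the topological one. We must check that the closure of $\Omega$ in $\overline{M}$ is controlled by the closed set $[0,1/c]\times\spt\psi$ rather than by $\{\psi>0\}$ alone. We also need that the collar product neighborhoods form a neighborhood basis of $q$ in $\overline{M}$. Both follow from the flow of $W$ identifying the collar with $[0,\rho_0)\times\partial M$ homeomorphically up to $\rho=0$, since $W$ is $C^1$ and transverse to $\partial M$.
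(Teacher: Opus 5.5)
Your proposal is correct and follows essentially the same route as the paper: a bump $\psi$ supported in a small boundary patch, $c\geq c_0(\psi)$ large enough that $(\max\psi)/c$ lies below the collar height, closure control via \eqref{eq:omega-coordinates}, and the inner neighborhood $U'$ coming from $\psi$ being bounded below near $q$. Normalizing $\psi\equiv1$ near $q$ is a harmless variant of the paper's choice $0<\delta<\psi(q)$. One small remark: citing Proposition \ref{prop:totally-convex-complements}(c) with $\delta=1=\max\psi$ falls outside that statement's hypothesis $\delta<\max\psi$, but your direct check $u_\psi=1/\rho>c$ already covers this, so nothing is missing.
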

\begin{proof}
    In the collar, choose an open set $U_0\subseteq\partial M$ containing $q$ and $0<\rho_1<\rho_0$ so that $[0,\rho_1]\times\overline{U_0}\subseteq U$. Let $\psi$ be a nonnegative smooth function with compact support in $U_0$ and positive on a neighborhood of $q$. Choose $c\geq c_0(\psi)$ large enough that $(\max\psi)/c<\rho_1$, and set $\Omega:=\Omega_{\psi,c}$. Then \eqref{eq:omega-coordinates} gives $\overline{\Omega}^{\,\overline{M}}\subseteq[0,\rho_1)\times\spt\psi\subseteq U$. Choose $0<\delta<\psi(q)$ and a neighborhood $U_q$ of $q$ in $\partial M$ on which $\psi\geq\delta$, and set $U':=[0,\delta/c)\times U_q$. Proposition \ref{prop:totally-convex-complements}(c) gives $U'\cap M\subseteq\Omega$.
\end{proof}

The following is the standard convex function argument which undergirds the convex hull property for stationary varifolds. Anderson uses the same first variation test with the distance, or a smooth approximation to distance, from a totally geodesic hyperplane in his Lemma~5 \cite{Anderson1982}. The only additional point here is that the support is not automatically compact. The condition at conformal infinity supplies the compactness needed for the test field.

\begin{lemma}\label{lem:barrier-exclusion}
    Let $\Omega=\Omega_{\psi,c}$, where $c\geq c_0(\psi)$, and let $V$ be a rectifiable $p$-varifold in $M$, $1\leq p\leq n$, which is stationary in $\Omega$. Suppose that
    \begin{equation}\label{eq:barrier-asymptotic-disjointness}
        \overline{\spt V}^{\,\overline{M}}
        \cap\partial_\infty\Omega=\emptyset.
    \end{equation}
    Then $\spt V\cap\Omega=\emptyset$. In particular, if $T$ is a compactly supported integral $p$-current, $|T|$ is stationary in $M\setminus\spt\partial T$, and $\spt\partial T\cap\Omega=\emptyset$, then $\spt T\cap\Omega=\emptyset$.
\end{lemma}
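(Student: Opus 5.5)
The plan is the standard convex-function argument: test the first variation of $V$ with the vector field $X=\eta(u_\psi)\nabla u_\psi$, where $\eta$ is a nondecreasing cutoff in the value of $u_\psi$. The asymptotic disjointness hypothesis \eqref{eq:barrier-asymptotic-disjointness} is what makes $X$ compactly supported on $\spt V$.

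First I would show that for every $c'>c$ the set $\spt V\cap\{u_\psi\geq c'\}$ is compact in $M$. Suppose not. Then there are points $x_k\in\spt V$ with $u_\psi(x_k)\geq c'$ that leave every compact set of $M$. By \eqref{eq:omega-coordinates} these points lie in $\Omega\subseteq\{\rho<(\max\psi)/c\}$, and $\overline M$ is compact, so after passing to a subsequence they converge to some $q\in\partial M$. The limit $q$ lies in $\overline{\Omega}^{\,\overline M}\cap\partial M=\partial_\infty\Omega$ and also in $\overline{\spt V}^{\,\overline M}$. This contradicts \eqref{eq:barrier-asymptotic-disjointness}. Hence, for a fixed $c'>c$, $K:=\spt V\cap\{u_\psi\geq c'\}$ is a compact subset of $\Omega$.

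Next, fix $c'>c$ and let $\eta\in C^1(\R)$ satisfy $\eta=0$ on $(-\infty,c']$ and $\eta'>0$, $\eta>0$ on $(c',\infty)$; for instance $\eta(t)=((t-c')_+)^2$. Set $X=\eta(u_\psi)\nabla u_\psi$ on $\Omega$. The field is $C^1$ because $u_\psi$ is $C^2$ on the collar. Its support meets $\spt V$ only inside $K$, so I would multiply it by a smooth cutoff that equals $1$ near $K$ and is compactly supported in $\Omega$. The resulting field is in $C^1_c(\Omega;TM)$ and agrees with $X$ near $\spt V$, so stationarity gives $\delta V(X)=0$. For a $p$-plane $P$ with orthonormal basis $e_i$,
\[
\dvg_P X=\eta'(u_\psi)|\nabla^P u_\psi|^2+\eta(u_\psi)\sum_{i=1}^p\Hess u_\psi(e_i,e_i)\geq \tfrac p2\,\eta(u_\psi)\,u_\psi,
\]
using Proposition~\ref{prop:totally-convex-complements}(a). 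Therefore
\[
0=\delta V(X)\geq \tfrac p2\int\eta(u_\psi)u_\psi\,d\mu_V.
\]
The integrand is strictly positive on $\{u_\psi>c'\}$, so $\mu_V(\{u_\psi>c'\})=0$ and hence $\spt V\cap\{u_\psi>c'\}=\emptyset$, since the latter set is open. Letting $c'\downarrow c$ gives $\spt V\cap\Omega=\emptyset$.

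For the statement about currents, apply the varifold result to $V=|T|$ in the open set $\Omega$. The hypothesis $\spt\partial T\cap\Omega=\emptyset$ means $\Omega\subseteq M\setminus\spt\partial T$, so $|T|$ is stationary in $\Omega$. Because $\spt T$ is compact in $M$, its closure in $\overline M$ misses $\partial M$, and \eqref{eq:barrier-asymptotic-disjointness} holds trivially.

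The main obstacle is the compactness step. It is exactly there that the condition at conformal infinity must be used, and one has to verify carefully that accumulation points of $\Omega$ in $\overline M$ outside $M$ lie in $\partial_\infty\Omega$. That follows from the definition of $\partial_\infty$ and from $\Omega$ lying in a collar. Everything else is routine.
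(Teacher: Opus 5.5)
Your proposal is correct and follows essentially the same route as the paper: you test the first variation with a compactly supported cutoff of $\eta(u_\psi)\nabla u_\psi$, use the asymptotic disjointness hypothesis to make $\spt V\cap\{u_\psi\geq c'\}$ a compact subset of $\Omega$, and conclude from $\Hess u_\psi\geq\frac{1}{2}u_\psi g$ that $\mu_V(\{u_\psi>c'\})=0$. The differences are only cosmetic. You argue directly for each $c'>c$ rather than by contradiction at a point $\xi$, and you spell out the sequential compactness argument that the paper states in one line.
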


\begin{proof}
    Let $u=u_\psi$. Since $du(W)=-u/\rho\neq0$ and
    \[
        \Hess u\geq\frac{1}{2}ug>0
    \]
    on $\Omega$, the level sets of $u$ form a strictly convex foliation of $\Omega$. Thus $\nabla u$ is the natural vector field to make admissible, via cutoff functions, for the first variation formula, so as to contradict stationarity.

    Assume by way of contradiction that $\xi\in\spt V\cap\Omega$, and choose $c<a<u(\xi)$. The set
    \[
        K:=\spt V\cap\{u\geq a\}
    \]
    is compactly contained in $\Omega$ by \eqref{eq:barrier-asymptotic-disjointness} and $a>c$. Choose $\chi\in C_c^1(\Omega)$ equal to $1$ on a neighborhood of $K$ and a bounded nondecreasing function $\eta\in C^1(\R)$ such that
    \[
        \eta=0\quad\text{on }(-\infty,a], \qquad
        \eta>0\quad\text{on }(a,\infty).
    \]
    Set $X=\chi\eta(u)\nabla u$, which is admissible for the first variation formula. Observe that at points of $\spt V$ with $u\leq a$, we have $\eta=\eta'=0$, while at points of $\spt V$ with $u>a$, we have $\chi=1$ and $d\chi=0$. Thus for $\mu_V$-almost every tangent $p$-plane $P$,
    \[
        \dvg_P X=\eta'(u)|\nabla^P u|^2+\eta(u)\tr_P \Hess u.
    \]
    Since $\xi\in\spt V\cap\{u>a\}$, the open set $\{u>a\}$ has positive $\mu_V$-measure. Thus by stationarity and Proposition \ref{prop:totally-convex-complements} we reach
    \begin{align*}
        0 = \delta V(X) &= \int_{\{u > a\}} \left(\eta'(u)|\nabla^P u|^2 + \eta(u)\tr_P \Hess u\right)\,d\mu_V \\
        &\geq \frac{p}{2}\int_{\{u > a\}}\eta(u)u\,d\mu_V > 0,
    \end{align*}
    a contradiction.

    For the compactly supported integral current version, compact support implies that the closure of $\spt|T|$ in $\overline{M}$ misses $\partial M$, and the boundary hypothesis makes $|T|$ stationary throughout $\Omega$. The result just proved for stationary varifolds then applies.
\end{proof}

We now intersect the totally convex complements of all $\Omega\in\mathcal{C}$ whose asymptotic boundaries avoid the prescribed closed set, and construct the barrier hull $\mathcal{H}(S)$.

\begin{definition}
    For closed $S\subseteq \partial M$, define
    \[
        \mathcal{H}(S) := \bigcap_{\substack{\Omega \in \mathcal{C} \\ \partial_\infty\Omega \cap S = \emptyset}} (M\setminus \Omega),
    \]
    with the convention that $\mathcal{H}(S):= M$ if it is an empty intersection. We say that $\Omega \in \mathcal{C}$ is \emph{admissible for $S$} if $\partial_\infty \Omega \cap S = \emptyset$.
\end{definition}

Note that the set $\mathcal{H}(S)$ depends on the defining function $\rho$ and the thresholds $c_0(\psi)$. The convex hull $\operatorname{Conv}_{\overline{M}}(S)$ introduced below, however, is canonical once a compactification is fixed.

Since $\mathcal{H}(S)$ is the intersection of closed totally convex sets by Proposition \ref{prop:totally-convex-complements}, it is also closed and totally convex. The next lemma confirms that it has the desired asymptotic boundary.

\begin{lemma}\label{lem:barrier-hull-boundary}
    For every closed $S \subseteq \partial M$,
    \[
        \partial_\infty \mathcal{H}(S) = S.
    \]
\end{lemma}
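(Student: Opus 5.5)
The plan is to prove the two inclusions separately, and both reduce to the collar description \eqref{eq:omega-coordinates} together with Corollary~\ref{cor:barriers-in-neighborhoods}.

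For $\partial_\infty\mathcal{H}(S)\subseteq S$, take $q\in\partial M\setminus S$. Since $S$ is closed, there is a neighborhood $U$ of $q$ in $\overline{M}$ with $\overline{U}\cap S=\emptyset$. Corollary~\ref{cor:barriers-in-neighborhoods} gives $\Omega\in\mathcal{C}$ and a neighborhood $U'$ of $q$ with $U'\cap M\subseteq\Omega$ and $\overline{\Omega}^{\,\overline{M}}\subseteq U$. Then $\partial_\infty\Omega\subseteq U\cap\partial M$ misses $S$, so $\Omega$ is admissible for $S$. Hence $\mathcal{H}(S)\subseteq M\setminus\Omega$, and so $\mathcal{H}(S)\cap U'=\emptyset$. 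This shows $q\notin\overline{\mathcal{H}(S)}^{\,\overline{M}}$. The case $S=\emptyset$ is included: $\mathcal{H}(\emptyset)$ is then disjoint from a neighborhood of every boundary point, so its asymptotic boundary is empty.

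For $S\subseteq\partial_\infty\mathcal{H}(S)$, fix $q\in S$. If there are no admissible $\Omega$, then $\mathcal{H}(S)=M$ and the claim is trivial. Otherwise, the plan is to show that the $W$-flow curve $\sigma_q(\rho)=(\rho,q)$ lies in $\mathcal{H}(S)$ for all sufficiently small $\rho$ (indeed for every $\rho\in(0,\rho_0)$), which gives $q\in\partial_\infty\mathcal{H}(S)$.

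Let $\Omega=\Omega_{\psi,c}$ be admissible. By Proposition~\ref{prop:totally-convex-complements}(c), $\partial_\infty\Omega=\overline{\{\psi>0\}}$, and since $q\in S$ we get $q\notin\overline{\{\psi>0\}}$, so $\psi(q)\le 0$. By \eqref{eq:omega-coordinates}, every point of $\Omega$ has $\psi(y)>0$, so $\sigma_q(\rho)\notin\Omega$. Because $\rho$ is a fixed function and the collar identification does not depend on $\Omega$, this holds uniformly over all admissible $\Omega$. Therefore $\sigma_q((0,\rho_0))\subseteq\mathcal{H}(S)$, and letting $\rho\downarrow0$ gives $q\in\partial_\infty\mathcal{H}(S)$.

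The step that needs the most care is the first inclusion: the admissible barrier must exclude an entire $\overline{M}$-neighborhood of $q$, not merely the single ray over $q$. Corollary~\ref{cor:barriers-in-neighborhoods} supplies exactly this, once $U$ is chosen with closure disjoint from $S$ so that admissibility follows.
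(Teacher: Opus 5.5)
Your proposal is correct and follows essentially the same proof as the paper. The inclusion $\partial_\infty\mathcal{H}(S)\subseteq S$ comes from an admissible barrier, supplied by Corollary~\ref{cor:barriers-in-neighborhoods}, that swallows a full $\overline{M}$-neighborhood of $q$; the reverse inclusion comes from $\psi(q)\le 0$ for every admissible $\Omega_{\psi,c}$, so the $W$-flow curve over $q$ avoids all of them, and your differences from the paper (choosing $U$ with closure disjoint from $S$, and reading off $\sigma_q(\rho)\notin\Omega$ from \eqref{eq:omega-coordinates} rather than from $u_\psi(\sigma_q(\rho))=\psi(q)/\rho\le 0$) are only cosmetic.
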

\begin{proof}
    Let $q \in \partial M\setminus S$. As $S$ is closed, we can choose a neighborhood $U$ of $q$ in $\overline{M}$ such that $U\cap\partial M\subseteq\partial M\setminus S$. By Corollary \ref{cor:barriers-in-neighborhoods}, there are $\Omega\in\mathcal{C}$ and a neighborhood $U'$ of $q$ in $\overline{M}$ such that $U'\cap M\subseteq\Omega$ and $\overline{\Omega}^{\,\overline{M}}\subseteq U$. Then $\Omega$ is admissible for $S$, and hence $\mathcal{H}(S)\cap U'=\emptyset$. It follows that $q\notin\partial_\infty\mathcal{H}(S)$.

    On the other hand, suppose $p \in S$, and let $\sigma_p(\rho)=(\rho,p)$ be the $W$-flow curve converging to $p$ as $\rho\downarrow0$. If $\Omega_{\psi, c}$ is admissible for $S$, then by construction
    \[
        p \notin \overline{\{\psi > 0\}},
    \]
    so $\psi(p) \leq 0$. Then $u_{\psi}(\sigma_p(\rho))=\psi(p)/\rho\leq0<c$, and thus the curve is disjoint from every $\Omega\in\mathcal{C}$ admissible for $S$. It therefore stays in $\mathcal{H}(S)$, forcing $p\in\partial_\infty\mathcal{H}(S)$.
\end{proof}

\begin{proof}[Proof of Theorem \ref{thm:barrier-hull}]
    $\mathcal{H}(S)$ is closed and totally convex by Proposition \ref{prop:totally-convex-complements}, and its asymptotic boundary equals $S$ by Lemma \ref{lem:barrier-hull-boundary}. Suppose that $V$ is stationary and $\partial_\infty\spt V\subseteq S$. For every $\Omega\in\mathcal{C}$ admissible for $S$,
    \[
        \partial_\infty\spt V\cap\partial_\infty\Omega=\emptyset.
    \]
    Lemma \ref{lem:barrier-exclusion} therefore gives $\spt V\cap\Omega=\emptyset$. Intersecting the complements of all such $\Omega$ yields $\spt V\subseteq\mathcal{H}(S)$.
\end{proof}

The following corollary establishes, in the present setting, the convex hull identity suggested by Gromov in \cite[Section~3.2]{Gromov1981}.

\begin{corollary}\label{cor:gromov-hull}
    For a closed set $S\subseteq\partial M$, define
    \[
        \operatorname{Conv}_{\overline{M}}(S):=\bigcap\left\{\overline{K}^{\,\overline{M}}:K\subset M\text{ is closed and totally convex, and }S\subseteq\partial_\infty K\right\}.
    \]
    Then
    \[
        \operatorname{Conv}_{\overline{M}}(S)\cap\partial M=S.
    \]
\end{corollary}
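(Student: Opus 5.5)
We prove the two inclusions separately, using the barrier hull $\mathcal{H}(S)$ as the main competitor.

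For $\operatorname{Conv}_{\overline{M}}(S)\cap\partial M\subseteq S$, the plan is to exhibit one member of the intersecting family whose closure meets $\partial M$ exactly in $S$. The barrier hull does this. By Theorem \ref{thm:barrier-hull} (equivalently Proposition \ref{prop:totally-convex-complements} together with Lemma \ref{lem:barrier-hull-boundary}), $K:=\mathcal{H}(S)$ is a closed, totally convex subset of $M$ with $\partial_\infty K=S$. It therefore belongs to the family, so
\[
\operatorname{Conv}_{\overline{M}}(S)\subseteq\overline{\mathcal{H}(S)}^{\,\overline{M}}.
\]
Intersecting with $\partial M$ and using $\overline{K}^{\,\overline{M}}\cap\partial M=\partial_\infty K$ by definition gives
\[
\operatorname{Conv}_{\overline{M}}(S)\cap\partial M\subseteq\partial_\infty\mathcal{H}(S)=S.
\]

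There is one degenerate case. If $S=\emptyset$, the definition makes $\mathcal{H}(\emptyset)$ the intersection over every $\Omega\in\mathcal{C}$. Lemma \ref{lem:barrier-hull-boundary} still gives empty asymptotic boundary, so the argument is unchanged. (Possibly $\mathcal{H}(\emptyset)$ is empty, which is harmless: the empty set is closed and totally convex.) If $S=\partial M$, there are no admissible $\Omega$, so $\mathcal{H}(S)=M$, and the claim is trivial.

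For the reverse inclusion $S\subseteq\operatorname{Conv}_{\overline{M}}(S)\cap\partial M$, take any $K$ in the family. Then
\[
S\subseteq\partial_\infty K=\overline{K}^{\,\overline{M}}\cap\partial M\subseteq\overline{K}^{\,\overline{M}}.
\]
So $S$ lies in every set being intersected, hence in $\operatorname{Conv}_{\overline{M}}(S)$. Since $S\subseteq\partial M$, this gives $S\subseteq\operatorname{Conv}_{\overline{M}}(S)\cap\partial M$. The family is nonempty because it contains $M$ itself, so the intersection is not vacuous.

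There is no real obstacle here: all the substance is in Lemma \ref{lem:barrier-hull-boundary}, which guarantees that some totally convex set has asymptotic boundary exactly $S$. The only point needing care is checking the convention that $\mathcal{H}(S)$ lies in $M$ and is closed in $M$, so that it qualifies as a $K$ in the definition.
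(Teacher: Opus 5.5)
Your proposal is correct and matches the paper's proof: $S$ lies in every member of the family, and the barrier hull $\mathcal{H}(S)$ is itself a closed totally convex member with $\partial_\infty\mathcal{H}(S)=S$, which forces $\operatorname{Conv}_{\overline{M}}(S)\cap\partial M\subseteq S$. Your remarks on the degenerate cases $S=\emptyset$ and $S=\partial M$ are harmless additions that the paper leaves implicit.
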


\begin{proof}
    Every member of the intersection contains $S$, so
    \[
        S\subseteq\operatorname{Conv}_{\overline{M}}(S)\cap\partial M.
    \]
    On the other hand, Theorem \ref{thm:barrier-hull} shows that $\mathcal{H}(S)$ is one of the closed totally convex sets in the intersection, with $\partial_\infty\mathcal{H}(S)=S$. Hence
    \[
        \operatorname{Conv}_{\overline{M}}(S)\subseteq\overline{\mathcal{H}(S)}^{\,\overline{M}},
    \]
    which gives the reverse inclusion on $\partial M$.
\end{proof}

\begin{remark}
    Anderson proved the asymptotic boundary identity for the convex hull of an arbitrary closed set in the asymptotic boundary of a complete simply connected Riemannian manifold with pinched negative curvature \cite[Theorem~3.3]{Anderson1983Dirichlet}. Interestingly, an upper negative curvature bound alone does not suffice. Ancona constructed a complete simply connected three-dimensional manifold with sectional curvature $K\leq-1$ and a point $\xi_0$ at infinity such that, for every neighborhood $V$ of $\xi_0$ in the visual compactification, the closed convex hull of $V\cap M$ is the whole manifold \cite[Corollary~C]{Ancona1994}.

    Corollary \ref{cor:gromov-hull} gives the corresponding convex hull identity for every $C^2$ conformally compact asymptotically locally hyperbolic manifold. Taken together with Ancona's example, the result shows that conformal compactness says something about convexity not solely contained in the upper curvature bound.
\end{remark}

In the pinched Cartan--Hadamard setting, Anderson starts with a geodesic sphere and iteratively makes local perturbations along successive spheres---these are the ``scallops.'' Rauch comparison controls the angles of the resulting seams, and this angular control is what enables prescribed boundary control in the full union. The increasing union is convex \cite[Section~2]{Anderson1983Dirichlet}.

The different convex set we find is $K_{\psi,c}:=M\setminus\Omega_{\psi,c}$. Every such $K_{\psi,c}$ contains a fixed compact core, whereas Anderson's construction gives convex domains contained in arbitrarily small truncated cones \cite[Theorem~3.1]{Anderson1983Dirichlet}. Still, the two constructions have the same separation property needed in applications. Namely, if $E\subset M$ is closed and $q\in\partial M\setminus\partial_\infty E$, choose a neighborhood $U$ of $q$ in $\overline{M}$ disjoint from $\overline{E}^{\,\overline{M}}$. Corollary~\ref{cor:barriers-in-neighborhoods} gives $\Omega=\Omega_{\psi,c}\in\mathcal{C}$ and a neighborhood $U'$ of $q$ in $\overline{M}$ such that
\[
    U'\cap M\subseteq\Omega_{\psi,c}, \qquad
    \overline{\Omega_{\psi,c}}^{\,\overline{M}}\subseteq U.
\]
Equivalently, $E\subset K_{\psi,c}$ and $K_{\psi,c}\cap U'=\emptyset$.

In related classes of manifolds, others have considered exactly this kind of convex separation as a defining feature. Ripoll--Telichevesky introduced the following condition on a Cartan--Hadamard manifold with its visual compactification: for every $q$ at infinity and every relative neighborhood $\mathcal{U}$ of $q$, there is a $C^2$ open set $\Omega\subset M$ such that
\[
    q\in\operatorname{Int}_{\partial_\infty M}(\partial_\infty\Omega)\subset\mathcal{U}, \qquad
    M\setminus\Omega\ \text{is geodesically convex}.
\]
They call this the strict convexity condition, or SC condition \cite[Definition~1.1]{RipollTelichevesky2015}. The same definition makes sense for a conformal compactification without the Cartan--Hadamard hypothesis, so here we call it the \emph{conformal SC condition}, to keep the two classes distinct.

Choi's convex conic neighborhood condition asks that for each $x$ in the visual boundary and every $y\neq x$, there are disjoint open neighborhoods $V_x,V_y$ of $x,y$, respectively, in the cone topology such that $V_x\cap M$ is convex with $C^2$ boundary \cite[Definition~4.6]{Choi1984}. The two conditions are similar, but the roles of the set and its complement are reversed, for in the SC condition, the small neighborhood at infinity is the part removed, while its complement is convex. One might ask: are these properties equivalent? Ripoll--Telichevesky observed in 2015 that it remained open whether the two conditions are equivalent \cite{RipollTelichevesky2015}.

The boundary of each $\Omega_{\psi,c}$ is $C^2$ because $du_\psi(W)=-u_\psi/\rho\neq0$ on its level set. Corollary \ref{cor:barriers-in-neighborhoods} therefore shows that every $C^2$ conformally compact asymptotically locally hyperbolic manifold satisfies the conformal SC condition. If, in addition, $(M,g)$ is Cartan--Hadamard and the identity on $M$ extends to a homeomorphism from its visual compactification to $\overline{M}$, then the notions of asymptotic boundary determined by the visual and conformal compactifications agree, so the same $\Omega_{\psi,c}$ is an SC set in the original sense.

The existing SC theory has strong consequences under its own global hypotheses. Ripoll--Telichevesky assume throughout that $M$ is Hadamard with $K_M\leq-k^2<0$. Under structural conditions on an operator in divergence form, SC implies regularity at infinity \cite[Theorem~2.5]{RipollTelichevesky2015}. Combined with solvability on an exhaustion by bounded domains and compactness for uniformly bounded sequences of solutions on relatively compact subsets, this gives solvability of the asymptotic Dirichlet problem \cite[Theorem~2.6]{RipollTelichevesky2015}. In particular, their Theorem~2.8 treats the minimal graph equation and the $p$-Laplacian, with uniqueness in the corresponding classes.

Casteras--Holopainen--Ripoll use SC in asymptotic Plateau results \cite[Theorems~1.5--1.8]{CHR2018}. Under SC alone, their theorem in arbitrary codimension uses coefficients modulo $2$, and their conclusion with integer multiplicity is codimension one with boundary data arising as the boundary of a set. Their integer results in arbitrary codimension require additional rotational symmetry, or curvature and integrability assumptions. An additional distinction to be made is that those results establish the asymptotic boundary of the support, but do not conclude the literal current boundary identity in the compactification or the absence of boundary mass proved here.

To summarize the contrast, the global hypotheses of those results are not assumed here. The compact part may have positive curvature or nontrivial topology, so $M$ is not necessarily Cartan--Hadamard and its conformal boundary is not automatically a visual boundary. Moreover, SC is a barrier hypothesis and does not by itself give solvability on compact domains or the interior estimates required for a particular equation. Still, the aforementioned common features indicate that solvability of the general asymptotic Dirichlet problem in the asymptotically locally hyperbolic setting, including the minimal graph equation and the $p$-Laplacian, should be eminently achievable using the family $\mathcal C$ once the usual existence theory on compact domains and the requisite interior estimates are in place.

\section{Compact Plateau problems and mass estimates}\label{sec:compact-plateau-and-mass-estimates}

Anderson's hyperbolic construction carries the prescribed boundary cycle to large geodesic spheres and fills the resulting cycles by compact Plateau minimizers. Comparison with cones gives an upper mass bound, while a homological argument forces the minimizers to meet a fixed compact set, after which geodesic ball monotonicity supplies the lower bound used to prevent disappearance in the limit \cite[Theorem~1 and the proof of Theorem~3]{Anderson1982}. A general asymptotically locally hyperbolic manifold lacks a global pole and the curvature bounds needed for that kind of comparison. Here, radial cylinders, which play well with currents, replace the cones, and a weighted first variation estimate in the collar carries their mass bound inward. The identity $\partial\overline{T}=A$ in the compactification will, instead, be the mechanism which ultimately prevents the limit of our minimizing sequence from disappearing.

For the rest of the proof, fix $p$, $A$, and $Q$ from Theorem \ref{thm:asymptotic-plateau}, set $S:=\spt A$, and let $q:=p-1\geq1$. We now need $p\geq2$ for the mass estimates below, because when $p=1$, the radial cylinder we consider has mass of order $t$, but with $q=0$, the collar estimate gives no decay to make up for this growth. The case $p=1$ thus requires a separate argument.

Recall that we let $O_t = \{r\leq t\}$ and $N_t = \partial O_t = \{r= t\}$. Let $G_r := e^{2r}g_r$ denote the metric induced on each $N_r$ by $g$. Then
\[
    \Hess r|_{TN_r} = \frac{1}{2} \partial_r G_r = e^{2r}\left(g_r + \frac{1}{2}\partial_r g_r\right), \qquad
    \Hess r(\partial_r, \cdot) = 0,
\]
so if we take $a > r_0$ big enough, then
\[
    \Hess r(v,v) \geq (1-C_0e^{-r})|v|_g^2,
\]
for $r \geq a$ and $v \in TN_r$, with
\begin{equation}\label{eq:collar-smallness}
    pC_0e^{-a} \leq 1.
\end{equation}

For $t \geq a$, define the following:
\[
    f_t:\overline{M}\to O_t, \qquad
    f_t = \text{Id on } O_t,\qquad
    f_t(r,\theta) = (t,\theta) \text{ for } r\geq t.
\]
So the $f_t$ retract the collar to $N_t$. In collar coordinates,
\[
    f_t(x,\theta) = (\max\{x,e^{-t}\}, \theta).
\]

Using an ambient collar of $\partial M$, each $f_t$ extends to a Lipschitz map from a neighborhood of $\overline{M}$ in $\widetilde{M}$ into $M$, with $f_t(\overline{M})\subseteq O_t$. Hence the pushforward is defined for all currents supported in $\overline{M}$ and, in particular, for the filling $Q$. Viewed as a map on $(M,g)$, we can show that $f_t$ is in fact $1$-Lipschitz.

\begin{lemma}\label{lem:radial-retraction}
    For every $t \geq a$, $f_t: (M,g) \to (O_t,g)$ is $1$-Lipschitz and fixes $N_t$.
\end{lemma}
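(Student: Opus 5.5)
The plan is a pointwise estimate of the differential of $f_t$, followed by an integration along curves.

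\emph{Where $f_t$ is trivial.} On $O_t$ the map is the identity, so it fixes $N_t$ and has differential of norm $1$. On the region $\{r\geq t\}$, which lies inside the collar because $t\geq a>r_0$, we use the normal form $g=dr^2+e^{2r}g_r$. There $f_t(r,\theta)=(t,\theta)$.

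\emph{Differential in the collar.} For a tangent vector $v=\alpha\partial_r+w$ with $w$ tangent to $\partial M$ at a point $(r,\theta)$, $r>t$, we get $df_t(v)=w$ at the point $(t,\theta)$. Hence
\[
|df_t(v)|_g^2=e^{2t}g_t(w,w), \qquad |v|_g^2=\alpha^2+e^{2r}g_r(w,w).
\]
So it suffices to show that $G_s=e^{2s}g_s$ is nondecreasing in $s$ as a quadratic form on $T\partial M$ for $s\geq a$. The paper has already computed
\[
\tfrac12\partial_sG_s=\Hess r|_{TN_s}\geq(1-C_0e^{-s})|\cdot|_g^2,
\]
and by \eqref{eq:collar-smallness} this is $\geq(1-1/p)\,G_s\geq0$ for $s\geq a$. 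Integrating from $t$ to $r$ gives $G_t(w,w)\leq G_r(w,w)$, so $|df_t(v)|_g\leq|v|_g$ on $\{r>t\}$.

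\emph{From pointwise to global.} Next I check that $f_t$ is continuous across $N_t$; it is, since both formulas agree there. It is also locally Lipschitz, being piecewise $C^1$ with respect to a $C^2$ hypersurface. For a piecewise $C^1$ curve $\gamma$ joining $x$ and $y$, I split $\gamma$ according to whether $r(\gamma)\leq t$ or $r(\gamma)\geq t$. Then $f_t\circ\gamma$ is absolutely continuous with $|(f_t\circ\gamma)'|\leq|\gamma'|$ almost everywhere. The set of times with $r(\gamma(s))=t$ needs no special treatment: at almost every such time the derivative computed from either side agrees, because $f_t$ is Lipschitz and $\gamma$ is differentiable, and the one-sided bounds both hold. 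Taking the infimum over curves, $d_g(f_t(x),f_t(y))\leq L(f_t\circ\gamma)\leq L(\gamma)$. This gives the $1$-Lipschitz property for the length metric on $M$. Since the image lies in $O_t$, the same bound holds for the distance of $M$ restricted to $O_t$; this is presumably the intended meaning of $(O_t,g)$.

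\emph{Main obstacle.} The key step is the monotonicity of $G_s$. This is where the earlier choice of $a$ and the second-fundamental-form computation $\Hess r|_{TN_r}=\frac12\partial_rG_r$ enter. One must also make sure the estimate $\partial_rg_r=O(e^{-r})$ really dominates uniformly, which is what $C_0$ encodes. A further subtlety is that if $(O_t,g)$ were given its intrinsic length metric, the curve $f_t\circ\gamma$ already lies in $O_t$, so the same argument still applies.
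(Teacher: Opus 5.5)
Your proposal is correct and follows essentially the same route as the paper: monotonicity of $G_s$ from $\frac12\partial_sG_s=\Hess r|_{TN_s}>0$ for $s\geq a$ gives $|df_t(v)|_g\leq|v|_g$ on $\{r>t\}$, and this pointwise bound is then integrated along curves, with the crossing set $\{r\circ\gamma=t\}$ handled almost everywhere. You spell out the step across $N_t$ in more detail than the paper, which only asserts it. Your phrase that ``the derivative computed from either side agrees'' is vague; the cleanest justification is that at accumulation points of $\{r\circ\gamma=t\}$ one has $(r\circ\gamma)'=0$, so $\gamma'$ is tangent to $N_t$, where both one-sided differentials are the identity.
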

\begin{proof}
    The latter conclusion is clear. Since $\partial_r G_r=2\Hess r|_{TN_r}$ is positive definite for $r\geq a$, we have $G_t\leq G_r$ whenever $r\geq t\geq a$. Hence for $r>t$
    \[
        |df_t(b\partial_r+v)|_g^2=|v|_{G_t}^2\leq |v|_{G_r}^2\leq |b\partial_r+v|_g^2.
    \]
    On $O_t$ the map is the identity. The estimate holds a.e.\ across $N_t$, which suffices to show that $f_t$ is globally $1$-Lipschitz.
\end{proof}

Next we bring the prescribed cycle into the interior. Let $i_t(\theta):=(t,\theta)$ and define
\[
    A_t := (i_t)_\# A.
\]
Then we have
\[
    \Mass_{g}(A_t) = e^{qt}\Mass_{g_t}(A),\qquad
    \sup_{t \geq a} \Mass_{g_t}(A) < \infty.
\]
Fix $Q_a:=(f_a)_\#Q$, so that $\partial Q_a=A_a$ and $\spt Q_a\subseteq O_a$. For $t\geq a$, let
\[
    H:[a,t]\times\partial M\longrightarrow M, \qquad
    H(s,\theta):=(s,\theta),
\]
and define
\[
    C_{a,t}:=H_\#\bigl(\llbracket[a,t]\rrbracket\times A\bigr).
\]
As $\partial A=0$, the homotopy formula gives
\[
    \partial C_{a,t}=A_t-A_a.
\]
Since $g=dr^2+G_r$ and $dr\perp TN_r$, the area formula gives
\[
    \Mass_g(C_{a,t})\leq\int_a^t\Mass_g(A_s)\,ds=\int_a^t e^{qs}\Mass_{g_s}(A)\,ds\leq Ce^{qt}.
\]
Thus
\[
    P_t:=Q_a+C_{a,t}
\]
is supported in $O_t$ and satisfies $\partial P_t=A_t$. After increasing $C$ to include the fixed mass of $Q_a$,
\[
    \Mass_g(P_t)\leq\Mass_g(Q_a)+\Mass_g(C_{a,t})\leq Ce^{qt},
\]
with $C$ independent of $t$.

\begin{proposition}\label{prop:compact-plateau}
    For every $t\geq a$, there exists $T_t \in \I_p(M)$ with
    \[
        \partial T_t = A_t, \qquad
        \spt T_t \subseteq O_t,
    \]
    and least mass among currents supported in $O_t$ with boundary $A_t$. There is also a constant $C$, independent of $t$, such that
    \begin{equation}\label{eq:compact-plateau-upper-bound}
        \Mass_g(T_t)\leq Ce^{qt}.
    \end{equation}
    Additionally:
    \begin{enumerate}[label=\textup{(\alph*)}]
        \item $T_t$ has least mass among all compactly supported Plateau fillings of $A_t$ in all of $M$;
        \item $T_t$ is locally absolutely area minimizing, and $|T_t|$ is stationary in $M\setminus \spt A_t$;
        \item for every $\Omega_{\psi,c}$ admissible for $S$, there is $t_0=t_0(\psi,c)$ such that $\spt T_t\cap\Omega_{\psi,c}=\emptyset$ whenever $t\geq t_0$.
    \end{enumerate}
\end{proposition}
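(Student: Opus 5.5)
Existence comes from the Federer--Fleming compactness theorem applied on the compact set $O_t$. The competitor $P_t$ shows that the class of integral $p$-currents supported in $O_t$ with boundary $A_t$ is nonempty. Take a minimizing sequence in this class. Masses are bounded, and the boundaries are fixed and equal to $A_t$. Everything is supported in the compact Lipschitz neighborhood retract $O_t$, which we may regard inside a compact piece of $M$. Compactness then gives a subsequence converging in flat norm to some $T_t\in\I_p(M)$ with $\spt T_t\subseteq O_t$ and $\partial T_t=A_t$. Lower semicontinuity of mass makes $T_t$ a minimizer in the class. The bound \eqref{eq:compact-plateau-upper-bound} follows immediately from $\Mass_g(T_t)\leq\Mass_g(P_t)\leq Ce^{qt}$.

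For (a), let $S'$ be any compactly supported integral current in $M$ with $\partial S'=A_t$. Lemma \ref{lem:radial-retraction} says $f_t$ is $1$-Lipschitz and fixes $N_t\supseteq\spt A_t$. Hence $(f_t)_\#S'$ is supported in $O_t$, has boundary $(f_t)_\#A_t=A_t$, and satisfies $\Mass_g((f_t)_\#S')\leq\Mass_g(S')$. Minimality in $O_t$ then gives $\Mass_g(T_t)\leq\Mass_g(S')$.

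For (b), take $S'$ with $\partial S'=A_t$ and $S'-T_t$ compactly supported; then $S'$ is itself compactly supported. Take $W$ relatively compact and open with $W\supseteq\spt(S'-T_t)$. Then $S'$ and $T_t$ agree outside $W$, so $\norm{S'}_g(M\setminus W)=\norm{T_t}_g(M\setminus W)$. Subtracting this common part from the global inequality in (a) yields $\norm{T_t}_g(W)\leq\norm{S'}_g(W)$. So $T_t$ is locally absolutely area minimizing. Stationarity of $|T_t|$ in $M\setminus\spt A_t$ is then the standard fact recorded in the preliminaries.

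For (c), apply the current version of Lemma \ref{lem:barrier-exclusion}. $T_t$ is compactly supported and $|T_t|$ is stationary off $\spt A_t$, so it remains to ensure $\spt A_t\cap\Omega_{\psi,c}=\emptyset$ for large $t$. This is the main point, though a mild one. Admissibility gives $\overline{\{\psi>0\}}\cap S=\emptyset$, so $\psi\leq0$ on the compact set $S$. The sets $\spt A_t=\{t\}\times S$ are described in the $(x,\theta)$ collar coordinates, while $\Omega_{\psi,c}$ is described in the $(\rho,y)$ coordinates via \eqref{eq:omega-coordinates}. I would argue by compactness in $\overline{M}$. The set $\overline{\Omega_{\psi,c}}^{\,\overline{M}}\cap\partial M=\overline{\{\psi>0\}}$ is disjoint from $S$. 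So there is a neighborhood $U$ of $S$ in $\overline{M}$ disjoint from $\Omega_{\psi,c}$: otherwise points of $\Omega_{\psi,c}$ would accumulate at $S$. Since $(t,\theta)\to(0,\theta)\in S$ uniformly in $\theta\in S$ as $t\to\infty$, there is $t_0$ with $\{t\}\times S\subseteq U$ for all $t\geq t_0$. The lemma then gives $\spt T_t\cap\Omega_{\psi,c}=\emptyset$ for $t\geq t_0$.
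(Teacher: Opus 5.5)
Your proposal is correct and follows the paper's proof essentially step for step. You get existence from Federer--Fleming in $O_t$ with $P_t$ as competitor, (a) by retracting with the $1$-Lipschitz map $f_t$, and (b) by subtracting the common mass outside $W$. For (c), you use that $\overline{\Omega_{\psi,c}}^{\,\overline{M}}$ misses the compact set $S$, so $\spt A_t$ eventually misses $\Omega_{\psi,c}$, and then apply Lemma~\ref{lem:barrier-exclusion}; your neighborhood argument just spells out the paper's one-line appeal to uniform convergence of $i_t|_S$ to the inclusion.
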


\begin{proof}
    Since $P_t$ is a competitor, Federer--Fleming compactness and lower semicontinuity of mass give a minimizer supported in the compact set $O_t$ \cite{FedererFleming1960}. That it is minimizing and the estimate for $P_t$ give \eqref{eq:compact-plateau-upper-bound}.

    If $X$ is any compactly supported filling of $A_t$ in $M$, then $(f_t)_{\#}X$ is supported in $O_t$ and has boundary $A_t$. Lemma \ref{lem:radial-retraction} therefore gives
    \[
        \Mass_{g}(T_t) \leq \Mass_g((f_t)_{\#}X) \leq \Mass_g(X).
    \]
    Thus $T_t$ minimizes among all compactly supported fillings of $A_t$ in $M$. If $Y-T_t$ has compact support and $\partial Y=\partial T_t$, then $Y$ is also compactly supported, and hence
    \[
        \Mass_g(T_t)\leq\Mass_g(Y).
    \]
    If $W\Subset M$ contains $\spt(Y-T_t)$, then $T_t=Y$ on $M\setminus W$, so their mass measures agree there. Therefore
    \begin{align*}
        \norm{T_t}_g(W) + \norm{T_t}_g(M\setminus W) &\leq \norm{Y}_g(W) + \norm{Y}_g(M\setminus W), \\
        \norm{T_t}_g(M\setminus W) &= \norm{Y}_g(M\setminus W).
    \end{align*}
    Subtracting gives $\norm{T_t}_g(W)\leq\norm{Y}_g(W)$. Thus $T_t$ is locally absolutely area minimizing, and the standard first variation argument gives stationarity in $M\setminus\spt A_t$.

    Finally, fix $\Omega_{\psi,c}$ admissible for $S$. Its closure in $\overline{M}$ is disjoint from the compact set $S\subset\partial M$. Since $i_t|_S$ converges uniformly to inclusion on the boundary, $\spt A_t$ misses $\Omega_{\psi,c}$ for all sufficiently large $t$. And an application of Lemma \ref{lem:barrier-exclusion} supplies the desired exclusion.
\end{proof}

We next control the portion of $T_t$ between $N_a$ and a fixed level $N_s$ below $N_t$. Proposition \ref{prop:compact-plateau}(b) gives stationarity in this region, while the right choice of $w$ compensates for the $O(e^{-r})$ error in $\Hess r$. 

\begin{lemma}\label{lem:weighted-collar-monotonicity}
    Let
    \[
        \delta(r) := C_0 e^{-r}, \qquad
        w(r) := \exp(-qC_0e^{-r}),
    \]
    and let $Z$ be an integral $p$-current of finite mass such that its associated varifold $|Z|$ is stationary in $\{a<r<t\}$. Define
    \[
        B(s):= \int_{\{a < r < s\}} w(r)\,d\norm{Z}_g.
    \]
    Then $s\mapsto e^{-qs}B(s)$ is nondecreasing on $(a,t)$. Equivalently, for every $a<s<u<t$,
    \[
        B(s) \leq e^{-q(u-s)}B(u).
    \]
    If $\Mass_g(Z) \leq Le^{qt}$, then
    \[
        \norm{Z}_g(\{a < r < s\}) \leq CLe^{qs}, \qquad
        a<s\leq t,
    \]
    where $C=w(a)^{-1}\leq e$.
\end{lemma}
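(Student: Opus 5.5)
The plan is to test the stationarity of $|Z|$ in $\{a<r<t\}$ against a radial field of the form
\[
    X=\eta(r)\,w(r)\,\nabla r,
\]
where $\eta$ is a Lipschitz cutoff approximating $\mathbf{1}_{(a,s)}$. Concretely, $\eta=1$ on $[a+\epsilon,s]$ and it is linear on $[a,a+\epsilon]$ and on $[s,s+h]$. The statement is only about the region $\{a<r<s\}$, so the field has to vanish near $N_a$; that is the role of the inner ramp. Near $N_a$ there is no stationarity, and the boundary term there must be handled with care, as discussed below. A $C^1$ mollification of the cutoff makes $X$ admissible, since $w(r)\nabla r$ is $C^1$ on the collar. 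We then pass to the limit.

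For a $p$-plane $P$ at a point with $r\ge a$, decompose $\nabla r=\nabla^P r+\nabla^\perp r$ and write $|\nabla^P r|^2=1-|\nabla^\perp r|^2$. Since $\Hess r(\partial_r,\cdot)=0$ and $\Hess r\ge(1-\delta)g$ on $TN_r$, an orthonormal basis of $P$ adapted to the splitting gives
\[
    \tr_P\Hess r\ge (1-\delta)(p-|\nabla^P r|^2)\ge q(1-\delta).
\]
Hence
\[
    \dvg_P(w\nabla r)=w'|\nabla^Pr|^2+w\tr_P\Hess r\ge w'|\nabla^P r|^2+qw(1-\delta).
\]
The weight is built to absorb the error term: $w'=qC_0e^{-r}w=q\delta w\ge0$. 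Using $|\nabla^Pr|^2\le1$ only in the favorable direction is not enough, so instead I split $qw(1-\delta)=qw-q\delta w$. Since $\tr_P\Hess r\ge(1-\delta)(p-|\nabla^Pr|^2)$ and $p-|\nabla^Pr|^2\ge q$, one obtains
\[
    \dvg_P(w\nabla r)\ge qw+q\delta w\bigl(|\nabla^Pr|^2-(p-|\nabla^P r|^2)/q\bigr)\cdots
\]
If this bookkeeping comes out unfavorable, I would use \eqref{eq:collar-smallness}, that is $p\delta\le1$, to keep $1-\delta>0$, and absorb the term by strengthening $w$. The target inequality is $\dvg_P(w\nabla r)\ge q\,w$ up to a term $w'|\nabla^Pr|^2$ with the good sign. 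This is the main computational step.

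Next, insert $X$ into $\delta|Z|(X)=0$. The term $\eta'(r)\,w\,|\nabla^Pr|^2$ on the outer ramp is $-h^{-1}w|\nabla^Pr|^2\ge -h^{-1}w$ in magnitude. As $h\to0$ it becomes the derivative $B'(s)$, since $|\nabla^P r|\le1$; this uses the coarea/slicing structure, and $B$ is monotone, hence differentiable a.e. The inner ramp contributes a term of the opposite sign, $+\epsilon^{-1}\int_{a<r<a+\epsilon}w|\nabla^Pr|^2\,d\|Z\|$. This sign is favorable, since it only adds to the left side. Thus we obtain $qB(s)\le B'(s)$ in the distributional sense. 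Consequently $(e^{-qs}B)'\ge0$, and this holds distributionally, which is enough because $B$ is nondecreasing and right-continuous. This gives the monotonicity, and it yields $B(s)\le e^{-q(u-s)}B(u)$.

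Finally, let $u\uparrow t$ and use $B(u)\le \Mass_g(Z)\le Le^{qt}$ (as $w\le1$) to get $B(s)\le Le^{qs}$. Since $w\ge w(a)$ on $\{r>a\}$, this gives $\|Z\|_g(\{a<r<s\})\le w(a)^{-1}Le^{qs}$. Moreover $w(a)^{-1}=\exp(qC_0e^{-a})\le e$ by \eqref{eq:collar-smallness}. The main obstacle is the pointwise divergence inequality with the precise weight $w$; the sign of the inner boundary term is the other point to check.
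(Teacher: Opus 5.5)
\textbf{Gap in the key divergence estimate.} Your architecture matches the paper's proof: the radial field $w\nabla r$, an inner cutoff whose contribution has the favorable sign, an outer ramp producing the derivative of $B$, then Gronwall. However, the step you yourself call the main one is never carried out, and the bookkeeping you sketch fails. Write $\alpha=|\nabla^P r|^2$. Your displayed bound simplifies to
\[
    \dvg_P(w\nabla r)\ \ge\ qw+q\delta w\Bigl(\alpha-\frac{p-\alpha}{q}\Bigr)=qw-p\delta w(1-\alpha).
\]
Your earlier bound $\tr_P\Hess r\ge q(1-\delta)$ only yields $qw-q\delta w(1-\alpha)$. Both error terms have the wrong sign whenever $\alpha<1$.

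Strengthening $w$ cannot repair this. When $P$ is tangent to $N_r$ (so $\alpha=0$), the $w'$ term vanishes entirely. What is needed there is $(1-\delta)p\ge q$, that is $p\delta\le 1$, which is stronger than $1-\delta>0$. In any case $w$ is fixed by the statement.

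The missing point is that you discarded the slack in $p-\alpha=q+(1-\alpha)$. Keeping it gives the identity
\[
    q\delta\alpha+(1-\delta)(p-\alpha)=q+(1-p\delta)(1-\alpha).
\]
Hence $\dvg_P(w\nabla r)\ge w\bigl(q+(1-p\delta)(1-\alpha)\bigr)\ge qw$, because $p\delta(r)\le pC_0e^{-a}\le 1$ for $r\ge a$ by \eqref{eq:collar-smallness}. The two hypotheses split the work. The relation $w'=q\delta w$ is exactly what is needed at $\alpha=1$, the collar smallness is exactly what is needed at $\alpha=0$, and the bound is affine in $\alpha$ in between.

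\textbf{The rest of the argument.} With this inequality in hand, the remainder goes through, with a few small corrections.
\begin{enumerate}
    \item You can integrate your difference-quotient inequality against nonnegative test functions. Alternatively, combine the a.e.\ inequality $qB\le B'$ with the fact that the monotone function $B$ has a nonnegative singular derivative. Either way you get $qB\,ds\le dB$ on $(a,t)$. So $(e^{-qs}B)'\ge 0$ distributionally, and $e^{-qs}B$ agrees a.e.\ with a nondecreasing function.
    \item One-sided continuity upgrades this to everywhere. But $B$, being defined through the open sublevel sets $\{r<s\}$, is left-continuous, not right-continuous as you claim. Left-continuity suffices just as well.
    \item Start the inner ramp at $a+\epsilon$ rather than at $a$. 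Otherwise the mollified $X$ is not compactly supported in $\{a<r<t\}$.
    \item Treat $s=t$ directly from the total mass bound, using $C\ge 1$.
\end{enumerate}
The paper sidesteps the a.e.-differentiability discussion by mollifying in the level variable. The function $F_{\kappa,\varepsilon}(\sigma)$ is $C^1$, so $qF\le F'$ holds pointwise, and dominated convergence recovers $B$ at the end.
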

\begin{proof}
    Write $\mu=\norm{Z}_g$. For $\mu$-a.e.\ $y$, let $P_y$ be the approximate tangent $p$-plane of $Z$ and let
    \[
        \alpha(y)=|\nabla^{P_y} r|^2\in[0,1].
    \]
    Since $\Hess r(\partial_r, \cdot)$ vanishes, the Hessian estimate in the collar gives
    \[
        \tr_{P_y} \Hess r\geq(1-\delta)(p-\alpha(y)).
    \]
    We have by construction $w'=q\delta w$. Hence, using $q=p-1$,
    \begin{align}
        \dvg_{P_y}(w\nabla r) &= w'\alpha(y) + w\tr_{P_y} \Hess r \notag \\
        &\geq w(q\delta\alpha(y) + (1 - \delta)(p - \alpha(y))) \notag \\
        &= w(q + (1 - p\delta)(1 - \alpha(y))) \geq qw, \label{eq:weighted-divergence}
    \end{align}
    where the last inequality follows from \eqref{eq:collar-smallness}.

    Now we use a standard smooth cutoff argument for stationary varifolds. Fix $a<s<u<t$. Choose $\chi_\kappa\in C^\infty(\R)$ nondecreasing such that
    \[
        \chi_\kappa=0\quad\text{on }\{r\leq a+\kappa\}, \qquad
        \chi_\kappa=1\quad\text{on }\{r\geq a+2\kappa\},
    \]
    where $0<2\kappa<s-a$, and choose a nondecreasing $\eta\in C^\infty(\R)$ with $\eta=0$ on $(-\infty,0]$ and $\eta=1$ on $[1,\infty)$. For $\sigma\in[s,u]$ and $\varepsilon>0$, set
    \[
        F_{\kappa,\varepsilon}(\sigma):=\int w(r)\chi_\kappa(r)\eta\!\left(\frac{\sigma-r}{\varepsilon}\right)\,d\mu.
    \]
    The vector field
    \[
        X_\sigma:=w(r)\chi_\kappa(r)\eta\!\left(\frac{\sigma-r}{\varepsilon}\right)\nabla r
    \]
    is compactly supported in $\{a<r<t\}$. Stationarity, \eqref{eq:weighted-divergence}, and $\chi_\kappa'\geq0$ give
    \begin{align*}
        qF_{\kappa,\varepsilon}(\sigma) &\leq \frac{1}{\varepsilon}\int w\chi_\kappa\alpha\eta'\!\left(\frac{\sigma - r}{\varepsilon}\right)\,d\mu \\
        &\leq \frac{1}{\varepsilon}\int w\chi_\kappa\eta'\!\left(\frac{\sigma - r}{\varepsilon}\right)\,d\mu = F_{\kappa,\varepsilon}'(\sigma),
    \end{align*}
    where the second inequality uses $0\leq\alpha\leq1$, and the equality follows by differentiation under the integral. Thus
    \[
        \frac{d}{d\sigma}\bigl(e^{-q\sigma}F_{\kappa,\varepsilon}(\sigma)\bigr)\geq0,
    \]
    and so
    \[
        F_{\kappa,\varepsilon}(s)\leq e^{-q(u-s)}F_{\kappa,\varepsilon}(u).
    \]
    Letting first $\varepsilon\downarrow 0$, and then $\kappa\downarrow 0$, dominated convergence supplies
    \[
        B(s)\leq e^{-q(u-s)}B(u).
    \]
    The choice $\eta(0)=0$ gives the strict sublevels. If $\Mass_g(Z)\leq Le^{qt}$, then
    \[
        B(s)\leq e^{-q(u-s)}B(u)\leq Le^{q(t-u+s)}, \qquad
        s<u<t.
    \]
    Letting $u\to t$ and using $w(r)\geq w(a)$ for $r\geq a$ gives
    \[
        \norm{Z}_g(\{a<r<s\})\leq w(a)^{-1}Le^{qs}.
    \]
    For $s=t$, the conclusion follows directly from the assumed total mass bound.
\end{proof}

\begin{remark}
    When $g$ is actually hyperbolic, take $r=-\log z$ in the upper half-space model. Then
    \[
        \Hess r=g-dr\otimes dr,
    \]
    so one may take $w\equiv1$, and the preceding argument gives
    \[
        e^{-qs}\norm{Z}_g(\{a<r<s\})
    \]
    nondecreasing. This is the horospherical analogue of the geodesic ball monotonicity used by Anderson \cite[Theorem~1]{Anderson1982}. 
\end{remark}

Since $\spt A_t\subseteq N_t$, Proposition \ref{prop:compact-plateau} gives stationarity of $|T_t|$ in $\{a<r<t\}$. Combining Lemma \ref{lem:weighted-collar-monotonicity} with \eqref{eq:compact-plateau-upper-bound} yields the key estimate
\begin{equation}\label{eq:uniform-collar-mass}
    \norm{T_t}_g(\{a < r < s\}) \leq Ce^{qs}
\end{equation}
for $a < s \leq t$, where $C$ does not depend on $t$.

Then \eqref{eq:uniform-collar-mass} controls the part of $T_t$ in the collar but does not yet say anything about its mass in $O_a$. So we next choose a slice in the fixed annulus $\{a<r<a+1\}$, fill it in $O_{a+1}$, and apply minimality of $T_t$ to compare this filling with the part of $T_t$ inside the slice. 

\begin{proposition}\label{prop:uniform-g-mass}
    There is $C$ such that, for every $t\geq a$ and every $a \leq s \leq t$,
    \[
        \norm{T_t}_g(O_s) \leq Ce^{qs}.
    \]
\end{proposition}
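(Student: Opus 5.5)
Since $O_s=O_a\cup\{a<r\leq s\}$ and \eqref{eq:uniform-collar-mass} already controls the collar part of $T_t$ uniformly in $t$ (up to the null set $N_s$, which we can avoid by applying \eqref{eq:uniform-collar-mass} with $s$ replaced by $\min\{s+1,t\}$, or at $s=t$ by the total mass bound \eqref{eq:compact-plateau-upper-bound}), the task reduces to a bound
\[
    \norm{T_t}_g(O_a)\leq C
\]
independent of $t$. For $t\leq a+1$ this follows from \eqref{eq:compact-plateau-upper-bound} directly, so we assume $t>a+1$. Since $e^{qs}\geq e^{qa}$ for $s\geq a$, this constant is absorbed into $Ce^{qs}$.

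To bound the mass in $O_a$ we slice $T_t$ by $r$ in the fixed annulus $\{a<r<a+1\}$. By the integrated slicing identity with $|\nabla^{T}r|\leq1$ (on the annulus, $r$ is $C^2$) and \eqref{eq:uniform-collar-mass} with $s=a+1$,
\[
    \int_a^{a+1}\Mass_g\bigl(\langle T_t,r,\sigma\rangle\bigr)\,d\sigma\leq\norm{T_t}_g(\{a<r<a+1\})\leq Ce^{q(a+1)}.
\]
Hence we can choose $\sigma\in(a,a+1)$ for which the slice $Y:=\langle T_t,r,\sigma\rangle$ is integral, supported in $N_\sigma$, and has $\Mass_g(Y)\leq C'$ with $C'$ independent of $t$. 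Since $\spt A_t\subseteq N_t$ lies in $\{r>\sigma\}$, we have $Y=\partial(T_t\mres O_\sigma)$, where $O_\sigma=\{r\le\sigma\}$ (the difference between $\{r<\sigma\}$ and $\{r\le\sigma\}$ is immaterial for a.e.\ $\sigma$).

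Next we construct a competitor filling $Y$ with bounded mass. We take the radial cone-free filling
\[
    R:=H_\#\bigl(\llbracket[a,\sigma]\rrbracket\times (i_\sigma^{-1})_\#Y\bigr)
\]
running from $N_a$ to $N_\sigma$, together with a filling of the resulting cycle $(f_a)_\#Y$ on $N_a$ inside $O_a$. The radial piece has mass at most $\int_a^\sigma e^{q(s-\sigma)}\Mass_g(Y)\,ds\leq C'$ (up to the uniform equivalence $\tfrac12h_0\le g_r\le2h_0$). The harder point is filling the cycle on $N_a$ in $O_a$ with mass bounded linearly in its own mass. For this we use the homology condition: $T_t\mres O_\sigma$ itself fills $Y$ in the compact set $O_{a+1}$, so $Y$ is a boundary there. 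Then either the isoperimetric/deformation theorem on the fixed compact Lipschitz neighborhood retract $O_{a+1}$ (Federer--Fleming) gives a filling $F$ with $\Mass_g(F)\leq C''(\Mass_g(Y)+\Mass_g(Y)^{p/(p-1)})$, or, more concretely, we can deform $Y$ to a polyhedral cycle with controlled error and fill the polyhedral cycle by a bounded chain. This is the step I expect to be the main obstacle: controlling fillings of null-homologous cycles of bounded mass in a fixed compact domain with boundary. I would cite the isoperimetric inequality for integral currents in compact Lipschitz neighborhood retracts (for instance via the Lipschitz retraction of a neighborhood in $\widetilde M$ and the Euclidean isoperimetric inequality in charts), which yields a filling supported in a slightly larger compact set. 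We then retract that filling back by $f_{a+1}$, which is $1$-Lipschitz by Lemma~\ref{lem:radial-retraction}.

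Finally, $T_t':=T_t-T_t\mres O_\sigma+F$ is a compactly supported filling of $A_t$. By Proposition~\ref{prop:compact-plateau}(a), $\Mass_g(T_t)\leq\Mass_g(T_t')$, and therefore
\[
    \norm{T_t}_g(O_\sigma)\leq\Mass_g(F)\leq C.
\]
In particular $\norm{T_t}_g(O_a)\leq C$. Adding this to the collar bound \eqref{eq:uniform-collar-mass} gives the proposition.
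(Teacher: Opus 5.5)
Your proposal is correct and is essentially the paper's argument: slice $T_t$ by $r$ in $(a,a+1)$ using \eqref{eq:uniform-collar-mass}, fill the bounded-mass slice inside $O_{a+1}$ (where it already bounds $T_t\mres\{r<\sigma\}$), and compare with $T_t$ via Proposition~\ref{prop:compact-plateau}(a). The one difference is how you get the uniform filling bound. The paper uses Federer--Fleming compactness of null-homologous cycles of bounded mass in $O_{a+1}$ together with continuity of the least filling mass, whereas you invoke a quantitative isoperimetric/deformation bound. For cycles that are not small, it is your deformation-to-polyhedral route using the null-homology of the slice that does the work, not the Euclidean isoperimetric inequality in charts. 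Also, your radial piece down to $N_a$ is unnecessary.
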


\begin{proof}
    Suppose first that $t>a+1$. By the slicing inequality and \eqref{eq:uniform-collar-mass},
    \[
        \int_a^{a+1}\Mass_g(\langle T_t,r,s\rangle)\,ds\leq\norm{T_t}_g(\{a<r<a+1\})\leq C.
    \]
    For a.e.\ $s$, the slice is integral and $\norm{T_t}_g(N_s)=0$. We can then choose $\tau_t\in(a,a+1)$ such that
    \[
        Z_t:=\langle T_t,r,\tau_t\rangle\in\I_q(M), \qquad
        \norm{T_t}_g(N_{\tau_t})=0, \qquad
        \Mass_g(Z_t)\leq C.
    \]
    Since $\tau_t<t$ and $\partial T_t$ is supported on $N_t$, the slice satisfies
    \[
        Z_t=\partial\bigl( T_t\mres\{r< \tau_t\}\bigr).
    \]
    The current on the right is supported in $O_{\tau_t}\subseteq O_{a+1}$, so $Z_t$ is an integral boundary there. By Federer--Fleming compactness, the integral $q$-cycles in the zero homology class of compact Lipschitz neighborhood retract $O_{a+1}$, with mass at most $C$, form a compact family in the flat topology \cite[Corollary~7.3 and Theorem~9.5]{FedererFleming1960}. The least mass of a filling in $O_{a+1}$ is continuous on this family \cite[Theorem~9.11 and Remark~9.17]{FedererFleming1960}, and is thus uniformly bounded. Hence there is a current $F_t\in\I_p(M)$ such that
    \[
        \partial F_t=Z_t, \qquad
        \spt F_t\subseteq O_{a+1}, \qquad
        \Mass_g(F_t)\leq C.
    \]
    It follows that
    \[
        \partial\bigl( T_t\mres\{r> \tau_t\}\bigr)=A_t-Z_t
    \]
    and hence
    \[
        Y_t:=T_t\mres\{r>\tau_t\}+F_t
    \]
    is a compactly supported filling of $A_t$. Proposition \ref{prop:compact-plateau}(a), together with $\norm{T_t}_g(N_{\tau_t})=0$, gives
    \begin{align*}
        \norm{T_t}_g(\{r < \tau_t\}) + \norm{T_t}_g(\{r > \tau_t\}) &\leq \Mass_g(Y_t) \\
        &\leq \norm{T_t}_g(\{r > \tau_t\}) + \Mass_g(F_t).
    \end{align*}
    Thus
    \[
        \norm{T_t}_g(\{r<\tau_t\})\leq C.
    \]
    Now let $a\leq s<t$ and choose $0<\varepsilon<t-s$. Since $\tau_t>a$,
    \[
        O_s\subseteq\{r<\tau_t\}\cup\{a<r<s+\varepsilon\}.
    \]
    The preceding estimate and \eqref{eq:uniform-collar-mass} imply
    \[
        \norm{T_t}_g(O_s)\leq C+Ce^{q(s+\varepsilon)}.
    \]
    Letting $\varepsilon\downarrow0$ gives the estimate. When $s=t$, the estimate follows from \eqref{eq:compact-plateau-upper-bound}. If $a\leq t\leq a+1$ and $s\leq t$, then
    \[
        \norm{T_t}_g(O_s)\leq\Mass_g(T_t)\leq Ce^{qt}\leq Ce^qe^{qs},
    \]
    which proves the claim.
\end{proof}

Proposition \ref{prop:uniform-g-mass} gives growth of order $e^{qs}$ for the mass in the metric $g$ on $O_s$. As already noted, compactification multiplies the mass measure of a $p$-current by $e^{-pr}$. Since $p-q=1$, summing over unit annuli leaves a factor $e^{-R}$, and we can get the following uniform estimates.

\begin{lemma}\label{lem:uniform-mass-bounds}
    There is $C$, independent of $t\geq a$, such that
    \[
        \Mass_{\widetilde{g}}(T_t)+\Mass_{\widetilde{g}}(\partial T_t)\leq C.
    \]
    Moreover, for every $R\geq a$,
    \begin{equation}\label{eq:compact-mass-decay}
        \norm{T_t}_{\overline{g}}(\{r>R\})\leq Ce^{-R}.
    \end{equation}
\end{lemma}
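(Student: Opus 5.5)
The plan is to convert the $g$-mass growth bound of Proposition~\ref{prop:uniform-g-mass} into compactified mass by summing over unit annuli, using the conformal scaling $d\norm{Z}_{\overline{g}}=e^{-pr}\,d\norm{Z}_g$ on the collar.

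\textbf{Decay estimate \eqref{eq:compact-mass-decay}.} Fix $R\geq a$ and $t\geq a$. If $t\leq R$ the left side vanishes, since $\spt T_t\subseteq O_t$. Otherwise decompose
\[
\{R<r\leq t\}\subseteq\bigcup_{k\geq0}\{R+k<r\leq R+k+1\}\cap O_t .
\]
On the $k$-th annulus the weight $e^{-pr}$ is at most $e^{-p(R+k)}$. Proposition~\ref{prop:uniform-g-mass}, applied with $s=\min\{R+k+1,t\}$, gives $\norm{T_t}_g$-mass at most $Ce^{q(R+k+1)}$ there. Hence
\[
\norm{T_t}_{\overline{g}}(\{r>R\})\leq C\sum_{k\geq0}e^{-p(R+k)}e^{q(R+k+1)}=Ce^{q}\sum_{k\geq0}e^{-(R+k)}\leq C'e^{-R},
\]
using $p-q=1$. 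The constant is independent of $t$. The only care needed is that $R\geq a\geq r_0$, so the whole region lies in the collar where the scaling identity holds.

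\textbf{Bound on $\Mass_{\widetilde g}(T_t)$.} Split $T_t=T_t\mres O_a+T_t\mres\{r>a\}$. On the compact set $O_a$, the metrics $g$ and $\widetilde g$ are uniformly equivalent, with constants depending only on $a$. Proposition~\ref{prop:uniform-g-mass} with $s=a$ then bounds $\norm{T_t}_{\widetilde g}(O_a)$ by a constant. The outer part is bounded by \eqref{eq:compact-mass-decay} with $R=a$, after comparing $\overline g$ with $\widetilde g$, which are uniformly equivalent on $\overline M$.

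\textbf{Bound on $\Mass_{\widetilde g}(\partial T_t)$.} Here $\partial T_t=A_t=(i_t)_\#A$. Since $\overline g=dx^2+h_x$ and $h_x\to h_0$, the maps $i_t:(\partial M,h_0)\to(\overline M,\overline g)$ are uniformly Lipschitz. More directly,
\[
\Mass_{\overline g}(A_t)=e^{-qt}\Mass_g(A_t)=\Mass_{g_t}(A)\leq\sup_{t\geq a}\Mass_{g_t}(A)<\infty,
\]
which is the bound already recorded before Proposition~\ref{prop:compact-plateau}. Converting to $\widetilde g$ costs only a fixed constant.

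I expect no step to be a serious obstacle. The mildest points are bookkeeping: the conformal identity applies only inside the collar, which is why the split is made at $r=a$, and the annulus at the top must be truncated at $t$ so that Proposition~\ref{prop:uniform-g-mass} applies with $s\leq t$.
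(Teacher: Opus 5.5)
Your proposal is correct and follows the paper's proof step for step: you decompose into unit annuli with weight $e^{-pr}$ and use $p-q=1$, you split at $O_a$ for the total $\widetilde{g}$-mass, and you use $\Mass_{\overline{g}}(A_t)=\Mass_{g_t}(A)$ for the boundary. Your truncation $s=\min\{R+k+1,t\}$ is a slightly more explicit version of how the paper treats the top annulus, which it handles by invoking $\spt T_t\subseteq O_t$.
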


\begin{proof}
    On the collar, conformal scaling immediately gives
    \[
        d\norm{T_t}_{\overline{g}}=e^{-pr}\,d\norm{T_t}_g.
    \]
    For $j\geq0$, set
    \[
        E_j:=\{R+j<r\leq R+j+1\}.
    \]
    Proposition \ref{prop:uniform-g-mass}, together with $\spt T_t\subseteq O_t$, gives, for every $j\geq0$,
    \[
        \norm{T_t}_g(E_j)\leq\norm{T_t}_g(O_{R+j+1})\leq Ce^{q(R+j+1)}.
    \]
    Therefore
    \begin{align*}
        \norm{T_t}_{\overline{g}}(\{r > R\}) &\leq \sum_{j=0}^\infty e^{-p(R + j)}\norm{T_t}_g(E_j) \\
        &\leq C\sum_{j=0}^\infty e^{-p(R + j)}e^{q(R + j + 1)} \leq Ce^{-R},
    \end{align*}
    since $p-q=1$. This proves \eqref{eq:compact-mass-decay}. On $O_a$, the metrics $g$, $\overline{g}$, and $\widetilde{g}$ are uniformly equivalent, so the $g$-mass bound there together with the decay estimate gives $\Mass_{\widetilde{g}}(T_t)\leq C$.

    Finally, the metric induced by $\overline{g}$ on $N_t$ is $g_t$, and thus
    \[
        \Mass_{\overline{g}}(A_t)=\Mass_{g_t}(A)\leq C.
    \]
    Uniform equivalence of $\widetilde{g}$ and $\overline{g}$ on $\overline{M}$ proves the estimate for the boundary mass.
\end{proof}

\section{Passage to the limit}\label{sec:passage-to-limit}

We now finish the proof of Theorem \ref{thm:asymptotic-plateau} by taking a limit of the compact Plateau minimizers $T_t$. The estimates from the previous section give the needed compactness and rule out mass accumulation on $\partial M$. Once we show that $A_t\to A$ in the flat norm, the desired conclusions follow by passing to the limit.

\begin{lemma}\label{lem:boundary-cycle-convergence}
    For every $t\geq a$, there is $D_t\in\I_p(\overline{M})$ such that
    \[
        \partial D_t=A_t-A, \qquad
        \Mass_{\widetilde{g}}(D_t)\leq Ce^{-t}.
    \]
    In particular,
    \[
        \mathcal{F}_{\widetilde{g}}(A_t-A)\leq Ce^{-t}.
    \]
\end{lemma}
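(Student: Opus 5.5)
The natural candidate for $D_t$ is the radial cylinder joining $\partial M$ to $N_t$, now measured in the compactified metric instead of $g$. Concretely, I would extend the collar homotopy $H(s,\theta)=(s,\theta)$ to the closed interval in the $x$ variable. Work in geodesic normal coordinates $(x,\theta)$ on $[0,2\varepsilon_0)\times\partial M$, which is $C^2$ up to the boundary because $h$ is $C^3$. Define
\[
    \widehat{H}:[0,e^{-t}]\times\partial M\to\overline{M},\qquad \widehat{H}(x,\theta):=(x,\theta),
\]
and set $D_t:=\pm\widehat{H}_\#\bigl(\llbracket[0,e^{-t}]\rrbracket\times A\bigr)$, with the sign chosen to match orientations. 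Since $\widehat{H}$ is Lipschitz, in fact $C^2$, up to $x=0$, the pushforward is an integral current in $\I_p(\overline{M})$, viewed in $\widetilde{M}$.

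The boundary identity then follows from the homotopy formula. Because $\partial A=0$,
\[
    \partial\widehat{H}_\#(\llbracket[0,e^{-t}]\rrbracket\times A)=\widehat{H}(e^{-t},\cdot)_\#A-\widehat{H}(0,\cdot)_\#A=A_t-A.
\]
Here $\widehat{H}(0,\cdot)$ is the inclusion $\partial M\hookrightarrow\overline{M}$, under which we identify currents on $\partial M$ with their pushforwards. Also $\widehat{H}(e^{-t},\cdot)=i_t$, because $r=t$ is the same as $x=e^{-t}$. For the orientation, I would check it against the convention already used for $C_{a,t}$, where $\partial C_{a,t}=A_t-A_a$. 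The $r$ direction is opposite to the $x$ direction, so with $[0,e^{-t}]$ in $x$ the sign works out to be $+$.

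For the mass bound I would use $\overline{g}=dx^2+h_x$ and orthogonality of $\partial_x$ to the slices $\{x\}\times\partial M$. The area formula, exactly as for $C_{a,t}$, gives
\[
    \Mass_{\overline{g}}(D_t)\le\int_0^{e^{-t}}\Mass_{h_x}(A)\,dx .
\]
Since $\tfrac12 h_0\le h_x\le 2h_0$ on the collar, $\Mass_{h_x}(A)\le 2^{q/2}\Mass_{h_0}(A)$, which is a fixed finite number. Hence $\Mass_{\overline{g}}(D_t)\le Ce^{-t}$. Uniform equivalence of $\widetilde{g}$ and $\overline{g}$ on $\overline{M}$ turns this into the $\widetilde{g}$ bound. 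For the flat norm, $A_t-A=\partial D_t+0$, so $\mathcal{F}_{\widetilde{g}}(A_t-A)\le\Mass_{\widetilde{g}}(D_t)\le Ce^{-t}$.

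The step I expect to need the most care is justifying the pushforward and the homotopy formula at $x=0$. There $g$ blows up, but $\widehat{H}$ is still a Lipschitz map into $(\widetilde{M},\widetilde{g})$. It is proper on the compact support of $\llbracket[0,e^{-t}]\rrbracket\times A$. The homotopy formula for Lipschitz maps of integral currents in $\widetilde{M}$ then applies directly, and the collar being $C^2$ up to the boundary is exactly what this requires. A secondary point is that the restriction $t\ge a$ keeps $e^{-t}<\varepsilon_0$, so the whole cylinder lies in the normal-form collar.
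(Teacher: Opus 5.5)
Your proposal is correct and matches the paper's proof: the paper also takes $D_t$ to be the pushforward of $\llbracket[0,e^{-t}]\rrbracket\times A$ under the collar parametrization in the $x$ coordinate, and gets $\partial D_t=A_t-A$ from the homotopy formula with $\partial A=0$. It then obtains $\Mass_{\widetilde{g}}(D_t)\leq Ce^{-t}\Mass_{h_0}(A)$ from uniform equivalence of the collar metrics, which your area-formula computation with $\overline{g}=dx^2+h_x$ and $h_x\leq 2h_0$ simply makes explicit.
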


\begin{proof}
    Let
    \[
        \iota:[0,\varepsilon_0]\times\partial M\longrightarrow\overline{M}
    \]
    denote the collar parametrization, where the first coordinate is $x$, and set
    \[
        D_t:=\iota_\#\bigl(\llbracket[0,e^{-t}]\rrbracket\times A\bigr).
    \]
    Since $\partial A=0$, $\partial D_t=A_t-A$. Uniform equivalence of the collar metrics gives
    \[
        \Mass_{\widetilde{g}}(D_t)\leq Ce^{-t}\Mass_{h_0}(A),
    \]
    and the flat norm estimate follows.
\end{proof}

Through Lemma \ref{lem:uniform-mass-bounds} we now have the bounds required for compactness and uniform decay of the $\overline{g}$-mass, while the convergence just proved will control the boundary of any limit.

\begin{proposition}\label{prop:limit-current}
    There is a sequence $t_j\to\infty$ and a $\overline{T}\in\I_p(\overline{M})$ such that
    \[
        \mathcal{F}_{\widetilde{g}}(T_{t_j}-\overline{T})\longrightarrow0, \qquad
        \partial\overline{T}=A, \qquad
        \norm{\overline{T}}_{\overline{g}}(\partial M)=0.
    \]
\end{proposition}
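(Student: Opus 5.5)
The plan is to apply Federer--Fleming compactness in the compact ambient manifold $\widetilde{M}$, using the uniform bound from Lemma \ref{lem:uniform-mass-bounds}:
\[
\Mass_{\widetilde{g}}(T_t)+\Mass_{\widetilde{g}}(\partial T_t)\leq C.
\]
Every $T_t$ is an integral current supported in the compact set $\overline{M}$. Hence, for any sequence $t_j\to\infty$, the compactness theorem \cite{FedererFleming1960} yields a subsequence, which we do not relabel, and some $\overline{T}\in\I_p(\widetilde{M})$ with $\mathcal{F}_{\widetilde{g}}(T_{t_j}-\overline{T})\to0$. Flat convergence implies weak convergence, so $\spt\overline{T}\subseteq\overline{M}$ and therefore $\overline{T}\in\I_p(\overline{M})$.

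The boundary identity follows from continuity of $\partial$ in the flat norm, since $\mathcal{F}(\partial Z)\leq\mathcal{F}(Z)$. Thus $\partial T_{t_j}\to\partial\overline{T}$ in flat norm. Now $\partial T_{t_j}=A_{t_j}$, and Lemma \ref{lem:boundary-cycle-convergence} gives $\mathcal{F}_{\widetilde{g}}(A_{t_j}-A)\leq Ce^{-t_j}\to0$. Uniqueness of flat limits then gives $\partial\overline{T}=A$.

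The step requiring care is the absence of mass on $\partial M$, and the tool is the decay estimate \eqref{eq:compact-mass-decay}. Fix $R\geq a$ and let $U_R:=\{x<e^{-R}\}\cup(\widetilde{M}\setminus\overline{M})$. This is a relatively open neighborhood of $\partial M$; more precisely, we take an open set $U_R\subseteq\widetilde{M}$ whose intersection with $\overline{M}$ is $\{0\leq x<e^{-R}\}$. Mass is lower semicontinuous on open sets under weak convergence, so
\[
\norm{\overline{T}}_{\widetilde{g}}(U_R)\leq\liminf_j\norm{T_{t_j}}_{\widetilde{g}}(U_R).
\]
Since $T_{t_j}$ is supported in $M$, the right side equals $\norm{T_{t_j}}_{\widetilde{g}}(\{r>R\})$. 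By the uniform equivalence of $\widetilde{g}$ and $\overline{g}$ on $\overline{M}$ and by \eqref{eq:compact-mass-decay}, this is at most $C'e^{-R}$. Because $\partial M\subseteq U_R$ for every $R$, we get $\norm{\overline{T}}_{\widetilde{g}}(\partial M)\leq C'e^{-R}$, and letting $R\to\infty$ gives zero. Uniform equivalence again transfers this conclusion to $\norm{\overline{T}}_{\overline{g}}(\partial M)=0$. For the $\overline{g}$-mass measure on $\partial M$, this is interpreted through the extension of $\overline{g}$ to $\overline{M}$.

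I expect the main obstacle to be justifying the lower semicontinuity step. It requires weak convergence together with a uniform mass bound, so that mass measures converge weakly along a further subsequence. Then $\norm{\overline{T}}\leq\mu$, where $\mu$ is the weak limit of $\norm{T_{t_j}}_{\widetilde{g}}$, and the open-set inequality holds for $\mu$. It also requires care that the neighborhoods $U_R$ are open in $\widetilde{M}$ and have the claimed traces on $\overline{M}$. This follows because the ambient collar extends the coordinate $x$ across $\partial M$.
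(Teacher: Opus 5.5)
Your proposal is correct and follows the paper's proof essentially step for step: Federer--Fleming compactness from Lemma \ref{lem:uniform-mass-bounds}, then continuity of $\partial$ in the flat norm combined with Lemma \ref{lem:boundary-cycle-convergence}, then lower semicontinuity of mass on open neighborhoods $U_R$ of $\partial M$ together with \eqref{eq:compact-mass-decay}. The differences are cosmetic: you conclude via $\partial M\subseteq U_R$ where the paper uses continuity from above, and the open-set lower semicontinuity you flag as the main obstacle follows directly from $\norm{Z}(U)=\sup\{Z(\omega):\spt\omega\subset U,\ \|\omega\|\leq1\}$ under weak convergence, with no further subsequence needed.
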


\begin{proof}
    Lemma \ref{lem:uniform-mass-bounds} and Federer--Fleming compactness on the closed manifold $(\widetilde{M},\widetilde{g})$ give a subsequence converging in the ambient flat norm to an integral current $\overline{T}$. Since every $T_{t_j}$ is supported in the closed subset $\overline{M}\subset\widetilde{M}$, the limit, too, is supported there. The boundary operator is continuous in the flat norm, and so Lemma \ref{lem:boundary-cycle-convergence} gives
    \[
        \partial\overline{T}=A.
    \]

    What remains, then, is to rule out any mass accumulation at the boundary. Choose decreasing open neighborhoods $U_R\subset\widetilde{M}$ of $\partial M$ such that $\bigcap_RU_R=\partial M$ and $U_R\cap M\subset\{r>R\}$. Lower semicontinuity on open sets, uniform equivalence of $\widetilde{g}$ and $\overline{g}$, and \eqref{eq:compact-mass-decay} imply
    \[
        \norm{\overline{T}}_{\widetilde{g}}(U_R)\leq\liminf_{j\to\infty}\norm{T_{t_j}}_{\widetilde{g}}(U_R)\leq Ce^{-R}.
    \]
    Continuity from above of the mass measure gives $\norm{\overline{T}}_{\widetilde{g}}(\partial M)=0$. The identity for $\overline{g}$ follows.
\end{proof}

\begin{proof}[Proof of Theorem \ref{thm:asymptotic-plateau}]
    Let $\overline{T}$ be the current in Proposition \ref{prop:limit-current} and $T=\overline{T}\mres M$, and write $S=\spt A$. For every test $(p-1)$-form $\omega$ that is compactly supported in $M$,
    \[
        \partial T(\omega)=\partial\overline{T}(\omega)=0,
    \]
    because $\partial\overline{T}$ is supported on $\partial M$.

    Let $W\Subset M$. For all sufficiently large $j$, the currents $T_{t_j}$ have no boundary near $W$, are absolutely area minimizing there by Proposition \ref{prop:compact-plateau}, and have uniformly bounded $g$-mass on compact subsets by Proposition~\ref{prop:uniform-g-mass}. Flat convergence in the smooth ambient manifold implies weak convergence of currents locally in $M$. Thus the closure theorem for minimizing currents \cite{Simon2016} shows that $T$ is locally absolutely area minimizing in $(M,g)$.

    Fix an $\Omega_{\psi,c}$ that is admissible for $S$. Proposition \ref{prop:compact-plateau}(c) shows that, for all sufficiently large $j$, the current $T_{t_j}$ misses $\Omega_{\psi,c}$, and so the weak limit $T$ also misses it. Intersecting over all such $\Omega_{\psi,c}$ gives $\spt T\subseteq\mathcal{H}(S)$. Lemma \ref{lem:barrier-hull-boundary} then gives
    \begin{equation}\label{eq:asymptotic-support-inclusion}
        \partial_\infty\spt T
        \subseteq\partial_\infty\mathcal{H}(S)
        =S.
    \end{equation}
    The absence of boundary mass implies that, as a current in $\widetilde{M}$, $\overline{T}$ is the extension by zero of its restriction $T=\overline{T}\mres M$, and that the support is
    \begin{equation}\label{eq:support-closure-identity}
        \spt\overline{T}=\overline{\spt T}^{\,\overline{M}}.
    \end{equation}
    Indeed, $\overline{\spt T}^{\,\overline{M}}\subseteq\spt\overline{T}$ is immediate because $\spt\overline{T}$ is closed and contains $\spt T$. For the other inclusion, notice that points of $\spt\overline{T}\cap M$ lie in $\spt T$, while every neighborhood of a point in $\spt\overline{T}\cap\partial M$ has positive $\overline{T}$-mass away from $\partial M$, and hence meets $\spt T$. This gives \eqref{eq:support-closure-identity}. 
    
    Then
    \[
        S=\spt A=\spt(\partial\overline{T})\subseteq\spt\overline{T}.
    \]
    Combining $S\subseteq\spt\overline{T}$ with \eqref{eq:support-closure-identity} proves $S\subseteq\partial_\infty\spt T$. Thus with \eqref{eq:asymptotic-support-inclusion} we have $\partial_\infty\spt T=S$.

    Finally, if $T=0$, then $\overline{T}=0$, contradicting $\partial\overline{T}=A\neq0$. Hence $T\neq0$, and we are done.
\end{proof}

Beyond existence, the $g$-mass bound and the decay of the $\overline{g}$-mass also pass to the limiting current.

\begin{corollary}
    There is a constant $C$ such that the current furnished by Theorem \ref{thm:asymptotic-plateau} satisfies
    \[
        \norm{T}_g(O_R)\leq Ce^{(p-1)R}, \qquad
        \norm{\overline{T}}_{\overline{g}}(\{r>R\})\leq Ce^{-R}
    \]
    for every $R\geq a$.
\end{corollary}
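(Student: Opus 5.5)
The plan is to pass the uniform estimates for the approximating minimizers $T_{t_j}$ to the limit using lower semicontinuity of mass on open sets under flat (hence weak) convergence. The key inputs are Proposition~\ref{prop:uniform-g-mass}, which gives $\norm{T_t}_g(O_s)\leq Ce^{qs}$ uniformly in $t\geq s$, and the decay estimate \eqref{eq:compact-mass-decay} of Lemma~\ref{lem:uniform-mass-bounds}. Here $q=p-1$, and $\overline{T}$, $T=\overline{T}\mres M$ and the sequence $t_j$ are those of Proposition~\ref{prop:limit-current}.

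\emph{The bound $\norm{T}_g(O_R)\leq Ce^{(p-1)R}$.} Fix $R\geq a$ and $\varepsilon>0$, and consider the open set $U:=\{r<R+\varepsilon\}\subset M$, which is relatively compact in $M$ because $r$ is proper. On a neighborhood of $\overline U$, the metrics $g$ and $\widetilde g$ are smooth and uniformly equivalent. Flat convergence in $\widetilde M$ therefore implies weak convergence of $T_{t_j}\mres M$ to $T$ as currents in $M$, so lower semicontinuity of $g$-mass on open sets applies. For $t_j\geq R+\varepsilon$ this gives
\[
\norm{T}_g(O_R)\leq\norm{T}_g(U)\leq\liminf_{j\to\infty}\norm{T_{t_j}}_g(U)\leq\liminf_{j\to\infty}\norm{T_{t_j}}_g(O_{R+\varepsilon})\leq Ce^{q(R+\varepsilon)}.
\]
Letting $\varepsilon\downarrow0$ gives the claim. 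The sublevel $U$ is used in place of $O_R$ itself because lower semicontinuity is only available on open sets, and $O_R$ is closed.

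\emph{The bound $\norm{\overline{T}}_{\overline{g}}(\{r>R\})\leq Ce^{-R}$.} Here I would argue exactly as in the proof of Proposition~\ref{prop:limit-current}, with a fixed level replacing the shrinking neighborhoods. Let $V_R\subset\widetilde M$ be the open set consisting of $\{r>R\}$ together with $\partial M$ and an exterior ambient collar. By Proposition~\ref{prop:limit-current}, $\overline{T}$ carries no mass on $\partial M$, and it is supported in $\overline M$, so $\norm{\overline T}(\{r>R\})=\norm{\overline T}(V_R)$. Lower semicontinuity in $\widetilde g$, followed by \eqref{eq:compact-mass-decay}, gives
\[
\norm{\overline T}_{\widetilde g}(V_R)\leq\liminf_{j\to\infty}\norm{T_{t_j}}_{\widetilde g}(V_R)\leq C\liminf_{j\to\infty}\norm{T_{t_j}}_{\overline g}(\{r>R\})\leq Ce^{-R}.
\]
Uniform equivalence of $\widetilde g$ and $\overline g$ on $\overline M$ then converts this to a $\overline g$-bound, with $C$ enlarged if necessary.

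The one point needing care is the legitimacy of the restriction step: flat convergence in $\widetilde M$ has to yield local weak convergence in $M$ with respect to the incomplete metric $g$. This holds because the test forms involved are compactly supported in the relatively compact sets where all the metrics are comparable, so the step is routine. The constants are independent of $R$ because every estimate used is uniform in $t$.
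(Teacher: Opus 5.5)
Your proposal is correct and follows the paper's proof. The first estimate is argued exactly as in the paper, via lower semicontinuity on $\{r<R+\varepsilon\}$ and Proposition~\ref{prop:uniform-g-mass}; for the second, the paper applies lower semicontinuity on the relatively compact annuli $\{R<r<L\}$ and lets $L\uparrow\infty$, whereas you apply it once on an ambient open neighborhood of $\partial M$ as in Proposition~\ref{prop:limit-current}, and since both use \eqref{eq:compact-mass-decay} and $\norm{\overline{T}}_{\overline{g}}(\partial M)=0$ in the same way, the difference is cosmetic.
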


\begin{proof}
    Fix $R\geq a$. For $\varepsilon>0$, local lower semicontinuity and Proposition \ref{prop:uniform-g-mass} give
    \[
        \norm{T}_g(O_R)\leq\norm{T}_g(\{r<R+\varepsilon\})\leq\liminf_{j\to\infty}\norm{T_{t_j}}_g(\{r<R+\varepsilon\})\leq Ce^{(p-1)(R+\varepsilon)}.
    \]
    Letting $\varepsilon\downarrow0$ proves the first estimate. For $L>R$, lower semicontinuity on the relatively compact region $\{R<r<L\}$ and \eqref{eq:compact-mass-decay} give
    \[
        \norm{\overline{T}}_{\overline{g}}(\{R<r<L\})\leq Ce^{-R}.
    \]
    Let $L\uparrow\infty$ and use $\norm{\overline{T}}_{\overline{g}}(\partial M)=0$.
\end{proof}

\begin{proof}[Proof of Corollary \ref{cor:codim-one}]
    Here, the current dimension is $n$, and the ambient dimension is $n+1$. The codimension one regularity theorem for locally area minimizing integral currents gives the description of the regular and singular sets, while the constancy theorem gives the multiplicity statement; see \cite{Simon2016}. When $n\leq6$, the support is a closed smooth embedded hypersurface in $M$, so the embedding is proper. Conformal compactness makes the metric $g$ complete, and a properly immersed submanifold of a complete Riemannian manifold is, of course, complete in its induced metric.
\end{proof}

\begin{remark}\label{rem:fixed-slice}
    If a smooth spacelike slice has an induced asymptotically locally hyperbolic metric satisfying the hypotheses of Theorem \ref{thm:asymptotic-plateau}, then, in the hypersurface case, that theorem constructs anchored integral currents which are locally absolutely area minimizing under compactly supported variations on the fixed slice. In AdS/CFT, Wall's maximin construction includes a minimizing step on a fixed slice in which one minimizes area among surfaces anchored at the entangling surface and homologous to the boundary region, and in quantum maximin, generalized entropy replaces area in this step \cite{Wall2014,AkersEngelhardtPeningtonUsatyuk2020}. In their discussion of existence on a fixed Cauchy slice, Akers et al.\ write that, even for minimal area surfaces, they are unaware of a fully mathematically rigorous proof that the surface is well defined in the limit as the boundary cutoff is taken to infinity \cite[Section~3.3]{AkersEngelhardtPeningtonUsatyuk2020}. Theorem \ref{thm:asymptotic-plateau} supplies a rigorous limit for the classical anchored Plateau problem considered here, which is closely related to the holographic problem.
\end{remark}

\bibliographystyle{amsplain}
\bibliography{refs}

\end{document}